\newtheorem{theorem}{Theorem}[section]
\newtheorem{lemma}[theorem]{Lemma}
\newtheorem{conjecture}[theorem]{Conjecture}
\theoremstyle{definition}
\theoremstyle{remark}
\numberwithin{equation}{section}
  \def\bff{{\mathbf f}}
\def\bfh{{\mathbf h}}
\def\bfi{{\mathbf i}}
\def\bfj{{\mathbf j}} \def\bfk{{\mathbf k}}
\def\bfu{{\mathbf u}}
\def\bfv{{\mathbf v}}
\def\bfw{{\mathbf w}}
\def\bfx{{\mathbf x}}
\def\bfy{{\mathbf y}}
\def\bfz{{\mathbf z}}
\def\calI{{\mathcal I}}
\def\calN{{\mathcal N}}
\def\calS{{\mathcal S}}
\def\calT{{\mathcal T}}
\def\dbC{{\mathbb C}}\def\dbN{{\mathbb N}}
\def\dbZ{{\mathbb Z}}\def\dbQ{{\mathbb Q}}
\def\gra{{\mathfrak a}}
\def\grf{{\mathfrak f}}
 \def\grg{{\mathfrak g}}
 \def\grh{{\mathfrak h}}
\def\alp{{\alpha}} \def\bfalp{{\boldsymbol \alpha}}
\def\del{{\delta}} \def\Del{{\Delta}}
  \def\Tet{{\Theta}}
\def\bfiota{{\boldsymbol \iota}} 
\def\kap{{\kappa}}
\def\sig{{\sigma}}
\def\ome{{\omega}} 
\def\d{{\partial}}
\def\eps{\varepsilon}
\def\le{\leqslant} \def\ge{\geqslant}
\def\d{{\,{\rm d}}}
\DeclareMathOperator{\card}{card}
\begin{document}
\title[Vinogradov systems with a slice off]{Vinogradov systems with a slice off}
\author[Julia Brandes]{Julia Brandes}
\address{Mathematical Sciences Research Institute, 17 Gauss Way, Berkeley,
CA 94720-5070, USA}

\address{Mathematical Sciences, Chalmers Institute of Technology and University of
Gothenburg, 412 96 G\"oteborg, Sweden}
\email{brjulia@chalmers.se}
\author[Trevor D. Wooley]{Trevor D. Wooley}
\address{School of Mathematics, University of Bristol, University Walk, Clifton,
Bristol BS8 1TW, United Kingdom}
\email{matdw@bristol.ac.uk}
\subjclass[2010]{11L15, 11D45, 11L07, 11P55}
\keywords{Exponential sums, Hardy-Littlewood method}
\dedicatory{In memoriam Klaus Friedrich Roth}
\date{}
\begin{abstract} Let $I_{s,k,r}(X)$ denote the number of integral solutions of the
modified Vinogradov system of equations
$$x_1^j+\ldots +x_s^j=y_1^j+\ldots +y_s^j\quad (\text{$1\le j\le k$, $j\ne r$}),$$
with $1\le x_i,y_i\le X$ $(1\le i\le s)$. By exploiting sharp estimates for an auxiliary mean
value, we obtain bounds for $I_{s,k,r}(X)$ for $1\le r\le k-1$. In particular, when
$s,k\in \dbN$ satisfy $k\ge 3$ and $1\le s\le (k^2-1)/2$, we establish the essentially
diagonal behaviour $I_{s,k,1}(X)\ll X^{s+\eps}$.
\end{abstract}
\maketitle

\section{Introduction}
Systems of symmetric diagonal equations are, by orthogonality,
intimately connected with mean values of exponential sums, and consequently find
numerous applications in the analytic theory of numbers. In this paper we consider the
number $I_{s,k,r}(X)$ of integral solutions of the system of equations
\begin{equation}\label{1.1}
    x_1^j+\ldots +x_s^j=y_1^j+\ldots +y_s^j\quad (\text{$1\le j\le k$, $j\ne r$}),
\end{equation}
with $1\le x_i,y_i\le X$ $(1\le i\le s)$. This system is related to that of Vinogradov in which
the equations (\ref{1.1}) are augmented with the additional slice
$$
    x_1^r+\ldots +x_s^r=y_1^r+\ldots +y_s^r,
$$
and may be viewed as a testing ground for progress on systems not of Vinogradov type.
Relatives of such systems have been employed in work on the existence of rational points
on systems of diagonal hypersurfaces as well as cognate paucity problems (see for
example \cite{BP2017, BR2012, BR2015}). The main conjecture for the system (\ref{1.1})
asserts that whenever $r,s,k\in \dbN$, $r<k$ and $\eps>0$, then
\begin{equation}\label{1.2}
    I_{s,k,r}(X)\ll X^{s+\eps}+X^{2s-(k^2+k-2r)/2}.
\end{equation}
Here and throughout, the constants implicit in Vinogradov's notation may depend on $s$,
$k$, and $\eps$. It is an easy exercise to establish a lower bound for $I_{s,k,r}(X)$ that
shows the estimate (\ref{1.2}) to be best possible, save that when $k>2$ one may expect
to be able to take $\eps$ to be zero. Our focus in this memoir is the diagonal regime
$I_{s,k,r}(X)\ll X^{s+\eps}$, and this we address with some level of success in the case
$r=1$.

\begin{theorem}\label{theorem1.1}
    Let $s,k\in \dbN$ satisfy $k\ge 3$ and
    $1\le s\le (k^2-1)/2$. Then for each $\eps>0$, one has $I_{s,k,1}(X)\ll X^{s+\eps}$.
\end{theorem}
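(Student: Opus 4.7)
By orthogonality we have
\[
I_{s,k,1}(X) = \int_{[0,1)^{k-1}} |g(\bfalp; X)|^{2s}\, d\bfalp = \sum_{|h| \le sX} N(h),
\]
where $g(\bfalp; X) = \sum_{x=1}^X e(\sum_{j=2}^k \alpha_j x^j)$ and $N(h)$ denotes the number of solutions of the full Vinogradov system of degree $k$ with linear discrepancy $\sum x_i - \sum y_i = h$; in particular $N(0) = J_{s,k}(X)$. The trivial bound $I_{s,k,1}(X) \le (2sX + 1) N(0)$, combined with $J_{s,k}(X) \ll X^{s+\eps}$ in the stated range (Bourgain--Demeter--Guth, or Wooley's efficient congruencing), yields only $I_{s,k,1}(X) \ll X^{s+1+\eps}$, wasting a factor of $X$.

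The plan is to recover this factor of $X$ by comparing $I_{s,k,1}(X)$ to the auxiliary mean value
\[
K(X) = \int_{[0,1)^{k}} |f(\bfalp; X)|^{2s}\, |W(\alpha_1)|^2\, d\bfalp,
\]
where $f(\bfalp; X) = \sum_{x \le X} e(\sum_{j=1}^k \alpha_j x^j)$ is the full degree-$k$ Weyl sum, and $W(\alpha_1) = \sum_{u=1}^Y e(\alpha_1 u)$ is a linear Weyl sum with $Y$ chosen slightly larger than $sX$. Expanding $|W|^2$ by orthogonality yields $K(X) = \sum_{|h|<Y} (Y - |h|) N(h) \asymp X \cdot I_{s,k,1}(X)$, so the target bound $I_{s,k,1}(X) \ll X^{s+\eps}$ reduces to $K(X) \ll X^{s+1+\eps}$. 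Writing $|f|^{2s}|W|^2 = |f^s W|^2$ and applying Parseval, $K(X)$ counts integral quadruples $(\bfx, u, \bfy, v) \in [1,X]^{2s} \times [1,Y]^2$ satisfying $\sum x_i + u = \sum y_i + v$ and $\sum x_i^j = \sum y_i^j$ for $j = 2, \ldots, k$, which is an enlarged Vinogradov-type system in $s+1$ variables per side in which the auxiliary pair $(u, v)$ enters only the linear equation.

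The main step is to bound this enlarged count sharply. The idea is to apply a H\"older decomposition of $|f^sW|^2$ that allows one to invoke the sharp Bourgain--Demeter--Guth estimate $J_{t,k}(X) \ll X^{t+\eps}$ for $t \le k(k+1)/2$ on the Vinogradov factor $|f|^{2(s+1)}$, while controlling the complementary linear factor involving $W$ via elementary moment estimates for the linear Weyl sum. The principal obstacle lies in arranging this decomposition with a loss of at most $X^{1+\eps}$: the shortfall from the conjectural range $s \le (k^2+k-2)/2$ to the achieved range $s \le (k^2-1)/2$ reflects precisely the deficit arising from the fact that the auxiliary pair $(u, v)$ participates only in the linear equation, and hence cannot fully inherit the translation-invariance structure underlying the sharpness of the classical Vinogradov main conjecture.
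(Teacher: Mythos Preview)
Your reformulation $K(X)\asymp X\cdot I_{s,k,1}(X)$ is correct, but the step you label ``the main step'' is not carried out, and in fact the method you sketch cannot close the gap. Any H\"older decomposition of $\int |f|^{2s}|W|^2\,d\bfalp$ into a Vinogradov moment and a pure $W$-moment has the form
\[
K(X)\le \Bigl(\oint |f|^{2sp}\,d\bfalp\Bigr)^{1/p}\Bigl(\int_0^1|W|^{2q}\,d\alp_1\Bigr)^{1/q},
\]
since $W$ depends only on $\alp_1$. The second factor is $\asymp Y^{2-1/q}$, so even with the sharp bound $J_{sp,k}(X)\ll X^{sp+\eps}$ (valid only when $sp\le k(k+1)/2$, hence $1/p\ge 2s/(k(k+1))$) you obtain at best $K(X)\ll X^{s+\eps}Y^{1+1/p}$, which exceeds the target $X^{s+1+\eps}$ by a factor of at least $X^{2s/(k(k+1))}$. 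For $s$ near $(k^2-1)/2$ this is nearly a full power of $X$. The specific choice you mention, using $J_{s+1,k}$, corresponds to $p=(s+1)/s$ and loses $X^{s/(s+1)}$. The difficulty is structural: $W$ carries no information about $\alp_2,\ldots,\alp_k$, so separating it from $f$ by H\"older throws away precisely the coupling you need.

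The paper proceeds quite differently. Rather than inserting a linear Weyl sum, it exploits translation invariance directly: for each solution of the incomplete system one shifts every variable by $z$ and averages over $1\le z\le X$. The binomial theorem then produces a genuine system in the shifted variables together with an auxiliary sum $\grg_1(\bfalp;X)=\sum_{|h|\le sX}\sum_{z\le X}e(\sum_{j\ge 1}\binom{j}{1}hz^{j-1}\alp_j)$, giving $I_{s,k,1}(X)\ll X^{-1}\oint|\grh_k|^{2s}\grg_1(-\bfalp)\,d\bfalp$. H\"older now splits off a moment $\oint|\grg_1|^{2\kap}\,d\bfalp$ with $\kap=\lfloor (k+1)/2\rfloor$, which counts solutions of the bilinear system $\sum_i h_i z_i^{j}=0$ $(0\le j\le k-1)$; the heart of the argument is an essentially diagonal bound $A_{\kap,1}(X)\ll X^{2\kap+\eps}$ for this count, proved via a multiplicative polynomial identity and divisor estimates, not via decoupling. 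It is this auxiliary estimate, absent from your outline, that wins back the factor of $X$.
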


In view of the main conjecture (\ref{1.2}), one would expect the conclusion of Theorem
\ref{theorem1.1} to hold in the extended range $1\le s\le (k^2+k-2)/2$. Previous work
already in the literature falls far short of such ambitious assertions. Work of the second
author from the early 1990's shows that $I_{s,k,r}(X)\ll X^{s+\eps}$ only for
$1\le s\le k$ (see \cite[Theorem 1]{Woo1993}). Meanwhile, as a consequence of the
second author's resolution of the main conjecture in the cubic case of Vinogradov's mean
value theorem \cite[Theorem 1.1]{Woo2016}, one has the bound
$I_{s,3,1}(X)\ll X^{s+\eps}$ for $1\le s\le 4$ (see \cite[Theorem 1.3]{Woo2015}). This
conclusion is matched by that of Theorem \ref{theorem1.1} above in the special case
$k=3$. The ideas underlying recent progress on Vinogradov's mean value theorem can,
however, be brought to bear on the problem of estimating $I_{s,k,r}(X)$. Thus, it is a
consequence of the second author's work on nested efficient congruencing
\cite[Corollary 1.2]{Woo2017} that one has $I_{s,k,r}(X)\ll X^{s+\eps}$ for
$1\le s\le k(k-1)/2$. Such a conclusion could also be established through methods related
to those of Bourgain, Demeter and Guth \cite{BDG2016}, though the necessary details
have yet to be elucidated in the published literature. Both the aforementioned estimate
$I_{4,3,1}(X)\ll X^{4+\eps}$, and the new bound reported in Theorem \ref{theorem1.1}
go well beyond this work based on efficient congruencing and $l^2$-decoupling. Indeed,
when $r=1$ we achieve an estimate tantamount to square-root cancellation in a range of
$2s$-th moments extending the interval $1\le s\le k(k-1)/2$ roughly half way to the full
conjectured range $1\le s\le (k^2+k-2)/2$.\par

Our strategy for proving Theorem \ref{theorem1.1} is based on the proof of the estimate
$I_{4,3,1}(X)\ll X^{4+\eps}$ in \cite[Theorem 1.3]{Woo2015}, though it is flexible
enough to deliver estimates for the mean value $I_{s,k,r}(X)$ with $r\ge 1$, as we now
outline. For each integral solution $\bfx,\bfy$ of the system (\ref{1.1}) with
$1\le \bfx,\bfy\le X$, one has the additional equation
\begin{equation}\label{1.3}
    \sum_{i=1}^s(x_i^r-y_i^r)=h,
\end{equation}
for some integer $h$ with $|h|\le sX^r$. We seek to count all such solutions with $h$ thus
constrained. For each integer $z$ with $1\le z\le X$, we find that whenever $\bfx,\bfy,h$
satisfy (\ref{1.1}) and (\ref{1.3}), then one has
\begin{equation}\label{1.4}
    \sum_{i=1}^s(u_i^j-v_i^j)=\ome_jhz^{j-r}\quad (1\le j\le k),
\end{equation}
where $\ome_j$ is $0$ for $1\le j<r$ and $\binom{j}{r}$ for $r\le j\le k$, and in which
we write $u_i=x_i+z$ and $v_i=y_i+z$ $(1\le i\le s)$. If we are able to obtain significant
cancellation in the number of solutions of the system (\ref{1.4}), now with $\bfu,\bfv$
constrained only by the conditions $1\le u_i,v_i\le 2X$ $(1\le i\le s)$, then the overcounting
by $z$ may be reversed to show that there is significant cancellation in the system
(\ref{1.1}) underpinning the mean value $I_{s,k,r}(X)$. This brings us to consider the
number of solutions of the system
\begin{equation}\label{1.5}
    \sum_{i=1}^{2t}h_iz_i^{j-r}=0\quad (r\le j\le k),
\end{equation}
with $|h_i|\le sX^r$ and $1\le z_i\le X$ $(1\le i\le 2t)$. This auxiliary mean value may be
analysed through the use of multiplicative polynomial identities engineered using ideas
related to those employed in \cite{Woo1993}.\par

The reader may be interested to learn the consequences of this strategy when $r$ is
permitted to exceed $1$. The conclusion of Theorem \ref{theorem1.1} is in fact a special
case of a more general result which, for $r\ge 2$, unfortunately fails to deliver diagonal
behaviour.

\begin{theorem}\label{theorem1.2}
    Let $r,s,k\in \dbN$ satisfy $k>r\ge 1$ and
    $$
        1\le s\le \frac{k(k+1)}{2}-\frac{k(k+1)-r(r-1)}{4\kap},
    $$
    where $\kap$ is an integer satisfying $1\le \kap\le (k-r+2)/2$. Then for each $\eps>0$, one has
    $$
        I_{s,k,r}(X)\ll X^{s+(r-1)(1-1/(2\kap))+\eps}.
    $$
\end{theorem}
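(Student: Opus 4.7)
The plan is to implement the shift strategy sketched in the introduction, generalising the argument behind Theorem \ref{theorem1.1} to arbitrary $r\ge 1$. For every solution of (\ref{1.1}) the relation (\ref{1.3}) defines an integer $h$ with $|h|\le sX^r$, and for each integer $z\in[1,X]$ the shifted tuples $u_i=x_i+z$, $v_i=y_i+z$ satisfy (\ref{1.4}) by the binomial theorem, since $\sum_i(x_i^l-y_i^l)=0$ for $l\ne r$. Summing trivially over $z\in[1,X]$ yields
$$X\cdot I_{s,k,r}(X)\le N(X),$$
where $N(X)$ counts the solutions of (\ref{1.4}) subject to $\bfu,\bfv\in[1,2X]^s$, $|h|\le sX^r$ and $1\le z\le X$.

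The next step is to express $N(X)$ via orthogonality as
$$N(X)=\int_{\dbT^k}|g(\bfalp)|^{2s}\,F(\bfalp)\,d\bfalp,$$
where $g(\bfalp)=\sum_{u=1}^{2X}e(\alp_1u+\cdots+\alp_ku^k)$ and
$$F(\bfalp)=\sum_{|h|\le sX^r}\sum_{z=1}^Xe\Bigl(-h\sum_{j=r}^k\binom{j}{r}\alp_jz^{j-r}\Bigr),$$
and then to apply Hölder's inequality with exponents $(2\kap,\,2\kap/(2\kap-1))$, obtaining
$$N(X)\le\Bigl(\int_{\dbT^k}|F|^{2\kap}\,d\bfalp\Bigr)^{\!1/(2\kap)}\Bigl(\int_{\dbT^k}|g|^{4s\kap/(2\kap-1)}\,d\bfalp\Bigr)^{\!(2\kap-1)/(2\kap)}.$$
The second factor is a Vinogradov mean value at the real exponent $2s\kap/(2\kap-1)$; under the hypothesis on $s$, this exponent falls in the regime covered by the resolved main conjecture of \cite{Woo2017, BDG2016}, which supplies a sharp bound. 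The first factor equals $K_\kap(X)^{1/(2\kap)}$, where $K_\kap(X)$ denotes the number of solutions of (\ref{1.5}) with $2t=2\kap$. The stated range for $s$ is calibrated precisely so that the combined estimate matches the desired conclusion after division by $X$.

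The heart of the argument, and the main obstacle, is to establish the required bound for the auxiliary mean value $K_\kap(X)$ via the multiplicative polynomial identities of \cite{Woo1993}. The system (\ref{1.5}) reads $\sum_{i=1}^{2\kap}h_iz_i^l=0$ for $0\le l\le k-r$, which expresses that $(h_1,\ldots,h_{2\kap})$ lies in the kernel of the Vandermonde-type matrix $(z_i^l)$. I would partition the solutions by the number of distinct values among $z_1,\ldots,z_{2\kap}$: in the generic case where all $z_i$ are distinct, the hypothesis $\kap\le(k-r+2)/2$ forces the kernel to have dimension at most one, making the count of lattice points in the box $|h_i|\le sX^r$ tractable; coincidences among the $z_i$ are treated by descent on the number of distinct values, grouping equal $z_i$ into combined weights and recursing on a smaller instance of the same system. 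The polynomial identities of \cite{Woo1993} furnish the algebraic device for gluing these two regimes into a uniform estimate. Assembling the bound for $K_\kap(X)$ with the Hölder inequality above and dividing by $X$ then delivers the claimed bound for $I_{s,k,r}(X)$.
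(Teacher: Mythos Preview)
Your shift--and--H\"older scheme is correct in outline and coincides with the paper's argument when $r=1$, but for $r\ge 2$ your H\"older split is too coarse to reach the full range of $s$ in the statement. With your decomposition the Vinogradov factor carries exponent $\sigma=2s\kap/(2\kap-1)$, and the subcritical bound $J_{\sigma,k}\ll X^{\sigma+\eps}$ requires $\sigma\le k(k+1)/2$, i.e.\ $s\le \tfrac{k(k+1)}{2}-\tfrac{k(k+1)}{4\kap}$. The hypothesis of the theorem, however, permits $s$ as large as $\tfrac{k(k+1)}{2}-\tfrac{k(k+1)-r(r-1)}{4\kap}$, which exceeds your threshold by $r(r-1)/(4\kap)$. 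At the top of the stated range your $\sigma$ lands in the supercritical regime, and feeding in $J_{\sigma,k}\ll X^{2\sigma-k(k+1)/2+\eps}$ instead yields only $I_{s,k,r}(X)\ll X^{s+(r-1)(1-1/(2\kap))+r(r-1)/(4\kap)+\eps}$, strictly worse than claimed. So the sentence ``the stated range for $s$ is calibrated precisely so that the combined estimate matches'' is false for $r\ge 2$.

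The paper repairs this by an unbalanced H\"older application: one keeps $|g|^{r(r-1)/(2\kap)}$ together with $F$, so that after raising to the $2\kap$-th power the second factor is $U_2=\oint |g|^{r(r-1)}|F|^{2\kap}\d\bfalp$. The point is that in the shifted system (\ref{1.4}) the equations of degree $j<r$ have $\ome_j=0$ and involve only $\bfu,\bfv$; hence in $U_2$ the $r(r-1)/2$ pairs of $g$--variables are governed by a degree--$(r-1)$ Vinogradov system, contributing $J_{r(r-1)/2,r-1}(2X)\ll X^{r(r-1)/2+\eps}$, while for each fixed $\bfu,\bfv$ the variables $\bfh,\bfz$ are counted by Theorem~\ref{theorem2.1}. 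Offloading these $r(r-1)$ copies of $g$ into $U_2$ brings the remaining Vinogradov exponent in $U_1$ down to $2(s-r(r-1)/(4\kap))\kap/(2\kap-1)$, which \emph{is} at most $k(k+1)/2$ throughout the stated range. This reallocation is the missing idea in your proposal. Separately, your Vandermonde-kernel sketch for $K_\kap(X)$ is in the right spirit but underspecified: in the borderline case $2\kap=k-r+2$ the one-dimensional kernel contributes, and a naive count of $O(X^r)$ lattice points per line times $X^{2\kap}$ choices of $\bfz$ already overshoots the target $X^{r(2\kap-1)+1}$ when $r=1$; the paper instead proves Theorem~\ref{theorem2.1} via the polynomial identities of Lemmas~\ref{lemma3.1}--\ref{lemma3.2} combined with a divisor argument.
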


When $r>1$, although we do not achieve diagonal behaviour, we do improve on the
estimate $I_{s,k,r}(X)\ll X^{s+r+\eps}$ that follows for $1\le s\le k(k+1)/2$ from the
main conjecture in Vinogradov's mean value theorem via the triangle inequality. When
$r>2$, the bound for $I_{s,k,r}(X)$ obtained in the conclusion of Theorem
\ref{theorem1.2} remains weaker than what could be obtained by interpolating between
the aforementioned bounds $I_{s,k,r}(X)\ll X^{s+\eps}$ $(1\le s\le k(k-1)/2)$ and
$I_{s,k,r}(X)\ll X^{s+r+\eps}$ $(1\le s\le k(k+1)/2)$. The former bound is, however, yet
to enter the published literature.\par

In \S2 we speculate concerning what bounds might hold for a class of mean values
associated with the system (\ref{1.5}). In particular, should a suitable analogue of the
main conjecture hold for this auxiliary mean value, then the conclusion of Theorem
\ref{theorem1.2} would be valid with a value of $\kap$ now permitted to be as large as
$$\kap=\left\lfloor \frac{(k-r)(k+r+1)+2}{4}\right\rfloor .$$
We refer the reader to Conjecture \ref{conjecture2.2} below for precise details, and we
note in particular the constraint \eqref{2.3}. When $r=1$ and $k\equiv 0$ or $3$ modulo
$4$, this would conditionally establish the estimate $I_{s,k,1}(X)\ll X^{s+\eps}$ in the
range $1\le s\le (k^2+k-2)/2$, and hence the main conjecture (\ref{1.2}) in full for these
cases. When $r>1$, this conditional result establishes a bound slightly stronger than
$I_{s,k,r}(X)\ll X^{s+r-1}$ when $1\le s\le (k^2+k-4)/2$, which seems quite respectable.
\par

We begin in \S2 by announcing an auxiliary mean value estimate generalising that
associated with the system (\ref{1.5}). This we establish in \S\S3--6, obtaining a
polynomial identity in \S3 of appropriate multiplicative type, establishing a lemma to count
integral points on auxiliary equations in \S4, and classifying solutions according to the
vanishing of certain sets of coefficients in \S5. In \S6 we combine these ideas with a
divisor estimate to complete the proof of this auxiliary estimate. Finally, in \S7, we provide
the details of the argument sketched above which establishes Theorems \ref{theorem1.1}
and \ref{theorem1.2}.\par

Throughout, the letters $r$, $s$ and $k$ will denote positive integers with $r<k$, and
$\eps$ will denote a sufficiently small positive number. We take $X$ to be a large positive
number depending at most on $s$, $k$ and $\eps$. The implicit constants in the notations
of Landau and Vinogradov will depend at most on $s$, $k$, $\eps$, and the coefficients of
fixed polynomials that we introduce. We adopt the following convention concerning the
number $\eps$. Whenever $\eps$ appears in a statement, we assert that the statement
holds for each $\eps>0$. Finally, we employ the non-standard convention that whenever
$G:[0,1)^k\rightarrow \dbC$ is integrable, then
$$
    \oint G(\bfalp)\d\bfalp =\int_{[0,1)^k}G(\bfalp)\d\bfalp .
$$
Here and elsewhere, we use vector notation liberally in a manner that is easily discerned
from the context.\vskip.2cm

\noindent {\bf Acknowledgements.}
Both authors thank the Fields Institute in Toronto for excellent working conditions
and support that made this work possible during the Thematic Program on Unlikely
Intersections, Heights, and Efficient Congruencing. The work of the first author was
supported by the National Science Foundation under Grant No. DMS-1440140 while the
author was in residence at the Mathematical Sciences Research Institute in Berkeley,
California, during the Spring 2017 semester. The second author's work was supported by a
European Research Council Advanced Grant under the European Union's Horizon 2020
research and innovation programme via grant agreement No. 695223.

\section{An auxiliary mean value}
Our focus in this section and those following lies on the
system of equations (\ref{1.5}), since this is intimately connected with the Vinogradov
system missing the slice of degree $r$. Since little additional effort is required to proceed
in wider generality, we establish a conclusion in which the monomials $z^{j-r}$
$(r\le j\le k)$ in (\ref{1.5}) are replaced by independent polynomials $f_j(z)$. We begin in
this section by introducing the notation required to state our main auxiliary result.\par

Let $t$ be a natural number. When $1\le j\le t$, consider a non-zero polynomial
$f_j\in \dbZ[x]$ of degree $k_j$. We say that $\bff=(f_1,\ldots ,f_t)$ is
{\it well-conditioned} when the degrees of the polynomials $f_j$ satisfy the condition
\begin{equation}\label{2.0}
0\le k_t<k_{t-1}<\ldots <k_1,
\end{equation}
and there is no positive integer $z$ for which $f_1(z)=\ldots =f_t(z)=0$.\par

Let $X$ be a positive number sufficiently large in terms of $t$, $\bfk$ and the coefficients
of $f$. We define the exponential sum $\grg(\bfalp;X)$ by putting
$$
    \grg(\bfalp;X)=\sum_{|h|\le X^r}\sum_{1\le z\le X}e\left(h(f_1(z)\alp_1+\ldots
    +f_t(z)\alp_t)\right) .
$$
Finally, we define the mean value
\begin{equation}\label{2.1}
    A_{s,r}(X;\bff)=\oint |\grg(\bfalp;X)|^{2s}\d\bfalp .
\end{equation}
By orthogonality, the mean value $A_{s,r}(X;\bff)$ counts the number of integral solutions
of the system of equations
\begin{equation}\label{2.2}
    \sum_{i=1}^{2s}h_if_j(z_i)=0\quad (1\le j\le t),
\end{equation}
with $|h_i|\le X^r$ and $1\le z_i\le X$ $(1\le i\le 2s)$. The system (\ref{2.2}) plainly
generalises (\ref{1.5}). Our immediate goal is to establish the mean value estimate
recorded in the following theorem.

\begin{theorem}\label{theorem2.1}
    Let $r$, $s$ and $t$ be natural numbers with
    $t\ge 2s-1$. Then whenever $\bff$ is a well-conditioned $t$-tuple of polynomials having
    integral coefficients, one has $A_{s,r}(X;\bff)\ll X^{r(2s-1)+1+\eps}$.
\end{theorem}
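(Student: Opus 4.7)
By orthogonality, $A_{s,r}(X;\bff)$ counts integer solutions of the system (2.2), and the target exponent $r(2s-1)+1$ matches exactly the contribution from the diagonal stratum $z_1=z_2=\ldots=z_{2s}=z$: on this stratum the well-conditioning forces at least one $f_j(z)\ne 0$, so the system collapses to the single equation $h_1+\ldots+h_{2s}=0$, giving $\ll X^{r(2s-1)}$ choices for $\bfh$ with $|h_i|\le X^r$ and $O(X)$ choices for $z$. The task is therefore to bound every off-diagonal stratum by $X^{r(2s-1)+1+\eps}$.

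The key ingredient is a polynomial identity of multiplicative type. For a well-conditioned $\bff$ with strictly decreasing degrees, the generalised Vandermonde determinant of any $(2s-1)\times(2s-1)$ submatrix $[f_j(z_i)]$ indexed by distinct variables factors so as to exhibit $\prod_{i<i'}(z_i-z_{i'})$ as an explicit factor, by row-reduction against the leading monomials. Cramer's rule then parametrises the null space of the full coefficient matrix $M(\bfz)=[f_j(z_i)]$ in the form $h_i=\alpha\,\lam_i(\bfz)/g(\bfz)$, where the $\lam_i$ are signed $(2s-1)\times(2s-1)$ minors of $M$ and $g(\bfz)=\gcd_i\lam_i(\bfz)$. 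This is the multiplicative identity: the integer vector $\bfh$ is determined by a single scalar $\alpha$, and the size constraint $|h_i|\le X^r$ translates into $|\alpha|\le X^rg(\bfz)/\max_i|\lam_i(\bfz)|$.

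With the parametrisation in hand, I would prove an auxiliary lemma bounding, for each fixed $\bfz$ with distinct coordinates, the number of admissible $\alpha$ by $\ll X^rg(\bfz)/\max_i|\lam_i(\bfz)|+1$. Summing the trailing $+1$ over $\bfz\in[1,X]^{2s}$ yields $\ll X^{2s}$, which is absorbed into $X^{r(2s-1)+1}$ for $r\ge 1$. For the leading term, one classifies solutions by the vanishing pattern of the cofactors $\lam_i$: whenever $\lam_i(\bfz)\ne 0$ for some $i$, the factorisation identifies $\max_i|\lam_i(\bfz)|$ with an explicit product in the differences $z_i-z_{i'}$, and the divisor estimate $\tau(n)\ll n^\eps$ absorbs $g(\bfz)$ into a factor $X^\eps$ when the sum over $\bfz$ is re-expressed as a sum over polynomial divisor relations.

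The main obstacle will be the intermediate strata in which a proper non-empty subset of the $\lam_i$ vanishes. These correspond to coincidence patterns on $\bfz$ forced by the Vandermonde factor, and the leading Cramer parametrisation there degenerates; one must descend to sub-Vandermondes on the set of distinct $z$-values. The hypothesis $t\ge 2s-1$ guarantees that this descent always lands on a non-degenerate sub-Vandermonde, since with fewer distinct $z$-values the equations needed to impose full column rank are correspondingly fewer, so an induction on the number of distinct $z$-values (equivalently, on the refinement of the set partition of $\{1,\ldots,2s\}$ induced by $\bfz$) closes the argument. Arranging this induction so that the accumulated $X^\eps$ losses from nested divisor bounds combine into a single $X^\eps$, rather than proliferating across strata, is the delicate bookkeeping; once that is in place every stratum is absorbed into $X^{r(2s-1)+1+\eps}$ and the proof closes.
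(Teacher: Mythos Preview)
Your linear-algebra route via Cramer's rule is genuinely different from the paper's argument, and the gap lies precisely where you flag it as ``delicate bookkeeping''. After parametrising $\bfh=\alpha(\lam_1/g,\ldots,\lam_{2s}/g)$ on the open stratum, you must control
\[
X^r\sum_{\substack{\bfz\in[1,X]^{2s}\\ z_i\ \text{distinct}}}\frac{g(\bfz)}{\max_i|\lam_i(\bfz)|}\ll X^{r(2s-1)+1+\eps}.
\]
Your sentence ``the divisor estimate $\tau(n)\ll n^\eps$ absorbs $g(\bfz)$ into a factor $X^\eps$ when the sum over $\bfz$ is re-expressed as a sum over polynomial divisor relations'' does not supply a mechanism. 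The quantity $g(\bfz)$ is a gcd of $2s$ polynomials of total degree $\binom{2s-1}{2}$ in $2s$ variables; it can be genuinely large on arithmetic progressions in $\bfz$, and there is no fixed integer here whose divisors you are counting. A divisor bound of the form $\tau(n)\ll n^\eps$ applies when one variable is pinned down as a divisor of a \emph{fixed} non-zero integer; in your sum, nothing is fixed. Even in the baby case $s=1$, $f_1(z)=z$, the required bound reduces to $\sum_{z_1,z_2\le X}\gcd(z_1,z_2)/\max(z_1,z_2)\ll X^{1+\eps}$, which is true but already needs an honest average-of-gcd argument rather than a pointwise $\tau$-bound; for $s\ge 2$ the analogous estimate for gcd's of generalised Vandermonde minors is not standard and you have not indicated how to obtain it.

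The paper circumvents this difficulty entirely by \emph{not} summing over $\bfz$. Instead of Cramer's rule, it constructs via a transcendence-degree argument a polynomial $\Psi_{s-1}\in\dbZ[w_1,\ldots,w_{2s-1}]$ vanishing identically on $(\sig_{1,s-1},\ldots,\sig_{2s-1,s-1})$ but not on $(\sig_{1,s},\ldots,\sig_{2s-1,s})$, and proves the factorisation $\Psi_{s-1}(\sig_{1,s},\ldots,\sig_{2s-1,s})=\Phi\cdot h_1\cdots h_s\prod_{i<j\le s}(z_i-z_j)$. The point is that the equations (\ref{5.1}) force $\sig_{j,s}(\bfz_\bfi;\bfh_\bfi)=-\sig_{j,s}(\bfz_{\bfi'};\bfh_{\bfi'})$, so after \emph{fixing} the second block $(\bfz_{\bfi'},\bfh_{\bfi'})$ the left-hand side becomes a fixed non-zero integer $N$; now $h_1\cdots h_s\prod(z_i-z_j)\mid N$ is a genuine divisor condition, and $\tau(N)\ll X^\eps$ applies. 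A further lemma then recovers $z_1$ from the differences. The classification into types $S_n$ and $T_{n,m}$ handles the cases where $\Psi_{s-1}$ vanishes, by descending to $\Psi_{n-1}$ on an $n$-subset and controlling the complementary variables through the coefficient-polynomial hierarchy $\calT_{n,m}$. This split-and-fix manoeuvre is the idea your proposal is missing; your induction on the partition of $\{1,\ldots,2s\}$ by equal $z$-values does not supply an analogue, because on each stratum you still face a free sum over the distinct $z$-values with an uncontrolled gcd in the numerator.
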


Note that when $r=1$, the conclusion of this theorem is tantamount to exhibiting
square-root cancellation in the mean value (\ref{2.1}), so is essentially best possible.
Indeed, even in situations wherein $r>1$, the solutions of (\ref{2.2}) in which
$z_1=z_2=\ldots =z_{2s}$ make a contribution to $A_{s,r}(X; \bff)$ of order
$X\cdot (X^r)^{2s-1}$, and so the conclusion of Theorem \ref{theorem2.1} is again
essentially best possible. Henceforth, we restrict ourselves to the situation described by the
hypotheses of Theorem \ref{theorem2.1}. Thus, we may suppose that $t\ge 2s-1$, and
that $\bff$ is a well-conditioned $t$-tuple of polynomials $f_j\in \dbZ[x]$ with
$\deg(f_j)=k_j\ge 0$.\par

It seems not unreasonable to speculate that the estimate claimed in the statement of
Theorem \ref{theorem2.1} should remain valid when $s$ is significantly larger than
$(t+1)/2$. The total number of choices for the $2s$ pairs of variables $h_i,z_i$ occurring
in the system (\ref{2.2}) is of order $(X^{r+1})^{2s}$. Meanwhile, the $t$ equations
comprising (\ref{2.2}) involve monomials having typical size of order $X^{r+k_j}$
$(1\le j\le t)$. Thus, for large $s$, one should expect that
$$
    A_{s,r}(X;\bff)\ll (X^r)^{2s-t}X^{2s-k_1-\ldots -k_t}.
$$
Keeping in mind the diagonal solutions discussed above, one is led to the following
conjecture.

\begin{conjecture}\label{conjecture2.2}
    Let $r$, $s$ and $t$ be natural numbers, and
    suppose that $\bff$ is a well-conditioned $t$-tuple of polynomials having integral
    coefficients, with $\deg(f_j)=k_j$ $(1\le j\le t)$. Then one has
    $$
        A_{s,r}(X;\bff)\ll X^\eps (X^{r(2s-1)+1}+X^{2s(r+1)-tr-k_1-\ldots -k_t}).
    $$
\end{conjecture}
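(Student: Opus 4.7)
The plan is to separate the argument according to which of the two terms on the right-hand side dominates. When $2s-1\le t$, the first term $X^{r(2s-1)+1+\eps}$ dominates, and this case is precisely Theorem~\ref{theorem2.1}. So I focus on the complementary regime $2s\ge t+2$, in which the target is the square-root cancellation bound $X^{2s(r+1)-tr-k_1-\ldots-k_t+\eps}$, reflecting the heuristic that each of the $t$ equations in (\ref{2.2}) saves a factor $X^{-(r+k_j)}$ off the total count $(X^{r+1})^{2s}$.

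My first move would be to exchange the order of summation: fix $\bfz=(z_1,\ldots,z_{2s})\in[1,X]^{2s}$ and count the number $N(\bfz)$ of $\bfh\in\dbZ^{2s}$ with $|h_i|\le X^r$ satisfying $M(\bfz)\bfh=\mathbf 0$, where $M(\bfz)$ is the $t\times 2s$ matrix with entries $f_j(z_i)$. Summing $N(\bfz)$ over $\bfz$ recovers $A_{s,r}(X;\bff)$. For $\bfz$ such that $M(\bfz)$ has full rank $t$, the set of admissible $\bfh$ lies in a sublattice $\Lam(\bfz)\subset\dbZ^{2s}$ of rank $2s-t$, and geometry of numbers (in the style of Bombieri--Vaaler) gives
$$N(\bfz)\ll (X^r)^{2s-t}/\disc(\Lam(\bfz))^{1/2}+\text{lower-order terms},$$
where by Cauchy--Binet, $\disc(\Lam(\bfz))=\sum_S\det(M_S(\bfz))^2$, the sum running over $t$-element subsets $S$ of $\{1,\ldots,2s\}$ with $M_S$ the corresponding $t\times t$ minor. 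Since each such minor is a generalised Vandermonde determinant built from $\bff$, one expects $\disc(\Lam(\bfz))^{1/2}$ to be of typical order $X^{k_1+\ldots+k_t}$ on most of $[1,X]^{2s}$, producing exactly the claimed exponent.

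The non-trivial task is to justify this heuristic rigorously. The bulk of the argument would be devoted to bounding the contribution of those degenerate $\bfz$ where $M(\bfz)$ has rank strictly less than $t$, either because of coincidences $z_i=z_j$, because several $z_i$ satisfy a resultant-type algebraic relation, or because of cancellation among the polynomials $f_j$. Here the machinery of \S\S3--6 enters naturally: the multiplicative polynomial identities of \S3 organise the vanishing of subdeterminants, the auxiliary counting lemma of \S4 controls integer points on these vanishing loci, the stratification by coefficient support in \S5 separates non-generic strata, and the divisor estimate of \S6 handles the resulting character sums. One would then couple these with an induction on either $s$ or the rank deficit, using Theorem~\ref{theorem2.1} as the base case, and exploiting the well-conditioned hypothesis to ensure that the degeneracy locus is genuinely thin.

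The hard part, and the reason this is a conjecture rather than a theorem, is obtaining sharp control over the rank-deficient strata uniformly across all admissible $\bff$. Specifically, one needs that whenever a $k$-dimensional algebraic subset of $\bfz$-space forces a specific minor of $M(\bfz)$ to vanish, the resulting savings from the reduced kernel dimension precisely offset the combinatorial cost of enumerating that subset; this is a statement about the interaction between the Newton polytope of $\det M_S$ and the lattice structure of $\Lam(\bfz)$ that seems to require substantially more refined tools than those developed for Theorem~\ref{theorem2.1}. A secondary, more technical obstacle is handling the "large $\disc(\Lam)$" tail of the $\bfz$-sum, where the successive minima of $\Lam(\bfz)$ are unbalanced and the Bombieri--Vaaler bound is not tight; here one would need a refined lattice point count sensitive to the individual minima rather than only the covolume.
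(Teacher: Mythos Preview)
The statement you have been asked to prove is labelled \emph{Conjecture} in the paper, and the paper does not prove it. The authors offer only a one-paragraph heuristic (the paragraph immediately preceding the conjecture): the total count of $(\bfh,\bfz)$ is of order $(X^{r+1})^{2s}$, each of the $t$ equations should save a factor $X^{r+k_j}$, and the diagonal solutions with $z_1=\ldots=z_{2s}$ contribute the first term $X^{r(2s-1)+1}$. There is therefore no proof in the paper against which to compare your attempt.

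Your proposal is not a proof either, and you say so yourself. What you have written is a plausible strategy---fix $\bfz$, count lattice points in the kernel of $M(\bfz)$ via geometry of numbers, and stratify by rank---together with an honest assessment of where it breaks down. Your heuristic for the main term matches the paper's own motivation exactly. The obstacles you identify (controlling the rank-deficient locus uniformly in $\bff$, and handling unbalanced successive minima) are genuine, and nothing in \S\S3--6 of the paper addresses them: that machinery is tailored to the regime $t\ge 2s-1$ of Theorem~\ref{theorem2.1}, where the diagonal term dominates, and does not obviously extend to the large-$s$ regime $2s\ge t+2$ where the second term takes over. In particular, the polynomial identities of \S3 and the coefficient stratification of \S5 are built around $\Psi_n$ with $2n+1\le t$, so invoking them ``naturally'' for arbitrary $s$ would require new ideas you have not supplied.

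In short: the paper contains no proof of this statement, your proposal is a reasonable outline of why one might believe it and where a proof would get stuck, and you have correctly flagged it as incomplete.
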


In the special case in which $t=k-r+1$ and $k_j=j-1$ $(1\le j\le t)$ relevant to the system
(\ref{1.5}), this conjectural bound reads
$$
    A_{s,r}(X;\bff)\ll X^{\eps}(X^{r(2s-1)+1}+X^{2s(r+1)-(k+r)(k-r+1)/2}).
$$
In such circumstances, one finds that
$$
    A_{s,r}(X;\bff)\ll X^{r(2s-1)+1+\eps},
$$
provided that $s$ is an integer satisfying
\begin{equation}\label{2.3}
    4s\le (k-r)(k+r+1)+2.
\end{equation}

We finish this section by remarking that the estimate $A_{s,r}(X;\bff)\ll X^{2rs}$ is
fairly easily established when $t\ge 2s$, a stronger condition than that imposed in
Theorem \ref{theorem2.1}, as we now sketch. We may suppose that $t=2s$ without loss,
and in such circumstances the equations (\ref{2.2}) may be interpreted as a system of
$2s$ linear equations in $2s$ variables $h_i$. There are $O(X^{2s})$ choices for the
variables $z_i$, contributing $O(X^{2s})$ to $A_{s,r}(X;\bff)$ from those solutions with
$\bfh={\mathbf 0}$. Meanwhile, if $\bfh\ne {\mathbf 0}$ one must have
\begin{equation}\label{2.4}
    \det\left( f_j(z_i)\right)_{1\le i,j\le 2s}=0.
\end{equation}
By applying the theory of Schur functions (see Macdonald \cite[Chapter I]{Mac1979}) as
in the proof of \cite[Lemma 1]{PW2002}, one finds that
$$
    \det(f_j(z_i))_{1\le i,j\le 2s}=\Tet(\bfz;\bff)\prod_{1\le i<j\le 2s}(z_i-z_j),
$$
where the polynomial $\Tet(\bfz;\bff)$ is asymptotically definite, meaning that whenever
$z_i$ is sufficiently large for $1\le i\le 2s$, then $|\Tet(\bfz;\bff)|\ge 1$.\par

The contribution to $A_{s,r}(X;\bff)$ arising from the solutions of (\ref{2.2}) with
$z_i=O(1)$, for some index $i$, is $O((X^r)^{2s})$. For if $z_i=O(1)$, then we may fix
$h_i$, and interpret the system as a mean value of exponential sums, applying the
triangle inequality. An application of H\"older's inequality reveals that if such solutions
dominate, then
$$
    A_{s,r}(X;\bff)\ll X^r \oint |\grg(\bfalp;X)|^{2s-1}\d\bfalp \ll X^r A_{s,r}(X;\bff)^{1-1/(2s)},
$$
and the desired conclusion follows. Meanwhile, if $z_i$ is sufficiently large for each index
$i$, then $|\Tet(\bfz; \bff)|$ is strictly positive and hence (\ref{2.4}) can hold only
when $z_i=z_j$ for some indices $i$ and $j$ with $1\le i<j\le 2s$. By symmetry we may
suppose that $i=2s-1$ and $j=2s$, and then we obtain from (\ref{2.2}) the new system of
equations
$$
    \sum_{i=1}^{2s-1}h_i'f_j(z_i)=0\quad (1\le j\le 2s),
$$
with $h_i'=h_i$ $(1\le i\le 2s-2)$ and $h_{2s-1}'=h_{2s-1}+h_{2s}$. This new system is
of similar shape to (\ref{2.2}), and we may apply an obvious inductive argument to bound
the number of its solutions. Here, we keep in mind that given $h_{2s-1}'$, there are
$O(X^r)$ possible choices for $h_{2s-1}$ and $h_{2s}$. Thus we conclude that if this
second class of solutions dominates, then one has
$$
    A_{s,r}(X;\bff)\ll X^r\cdot X^{r(2s-1)}\ll X^{2rs}.
$$
This completes our sketch of the proof that when $t=2s$, the total number of solutions
counted by $A_{s,r}(X;\bff)$ is $O(X^{2rs})$. The reader will likely have no difficulty in
refining this argument to deliver the conclusion of Theorem \ref{theorem2.1} when $t=2s$.

\section{A polynomial identity} The structure of the polynomials $hf_j(z)$ underlying the
mean value $A_{s,r}(X;\bff)$ permits polynomial identities to be constructed of utility in
constraining solutions of the underlying system of equations (\ref{2.2}). In this section we
construct such identities.\par

For the sake of concision, when $n$ is a natural number and $1\le j\le t$, we define the
polynomial $\sig_{j,n}=\sig_{j,n}(\bfz;\bfh)$ by putting
$$
    \sig_{j,n}(\bfz;\bfh)=h_1f_j(z_1)+\ldots +h_nf_j(z_n).
$$

\begin{lemma}\label{lemma3.1}
    Suppose that $n\ge 1$ and that
    $\bff=(f_1,\ldots ,f_{2n+1})$ is a well-conditioned $(2n+1)$-tuple of polynomials having
    integral coefficients. Then there exists a polynomial
    $\Psi_n(\bfw)\in \dbZ[w_1,\ldots ,w_{2n+1}]$, with total degree and coefficients depending
    at most on $n$, $\bfk$ and the coefficients of $\bff$, such that
    \begin{equation}\label{3.1}
        \Psi_n(\sig_{1,n}(\bfz;\bfh),\ldots,\sig_{2n+1,n}(\bfz;\bfh))=0
    \end{equation}
    identically in $\bfz$ and $\bfh$, and yet
    \begin{equation}\label{3.2}
        \Psi_n(\sig_{1,n+1}(\bfz;\bfh),\ldots ,\sig_{2n+1,n+1}(\bfz;\bfh))\ne 0.
    \end{equation}
\end{lemma}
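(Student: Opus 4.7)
My plan is to view the quantities $(\sigma_{1,n}(\bfz;\bfh),\ldots,\sigma_{2n+1,n}(\bfz;\bfh))$ as the components of a polynomial map $\phi_n\colon\mathbb{A}^{2n}\to\mathbb{A}^{2n+1}$, and to exploit a dimension comparison. The source of $\phi_n$ has strictly smaller dimension than the target, so its image is contained in a proper subvariety cut out by a non-zero polynomial $\Psi_n$; conversely, $\phi_{n+1}\colon\mathbb{A}^{2n+2}\to\mathbb{A}^{2n+1}$ has larger source than target, and one expects it to be dominant, so that no non-zero polynomial can vanish identically on its image.

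To establish \eqref{3.1}, note that the Zariski closure of $\mathrm{Im}(\phi_n)$ is a closed subvariety of $\mathbb{A}^{2n+1}$ of dimension at most $2n$, hence a proper subvariety whose defining ideal contains a non-zero polynomial $\Psi_n$. Since the components of $\phi_n$ have coefficients in $\dbZ$, the closure is defined over $\dbQ$, and after clearing denominators we may take $\Psi_n\in\dbZ[w_1,\ldots,w_{2n+1}]$; standard elimination-theoretic arguments (resultants, or effective Nullstellensatz) then control the total degree and coefficient sizes of $\Psi_n$ in terms of $n$, $\bfk$, and the coefficients of $\bff$.

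To establish \eqref{3.2}, it suffices to verify that $\phi_{n+1}$ is dominant, for then $\Psi_n\neq 0$ automatically implies $\Psi_n(\sigma_{1,n+1},\ldots,\sigma_{2n+1,n+1})\not\equiv 0$. Dominance is equivalent to the Jacobian of $\phi_{n+1}$ attaining rank $2n+1$ at some point. Setting $h_1=\cdots=h_{n+1}=1$, the $(2n+1)\times(2n+2)$ Jacobian has $j$-th row $(f_j'(z_1),\ldots,f_j'(z_{n+1}),f_j(z_1),\ldots,f_j(z_{n+1}))$, and its rank equals the dimension of the image of the two-jet evaluation map
\[
\mathrm{ev}_z^{(2)}\colon\mathrm{span}(f_1,\ldots,f_{2n+1})\to\dbC^{2(n+1)},\qquad f\mapsto(f'(z_i),f(z_i))_{i=1}^{n+1}.
\]
Its kernel consists of polynomials in $\mathrm{span}(f_j)$ divisible by $p_z(x):=\prod_{i=1}^{n+1}(x-z_i)^2$, a polynomial of degree $2n+2$. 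A dimension count in $\dbC[x]_{\le k_1}$ yields expected intersection dimension zero, so the kernel is trivial for generic $z$, and the Jacobian then attains full rank $2n+1$.

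I expect the main obstacle to be verifying that this generic-position statement is genuinely realised by the specific subspace $\mathrm{span}(f_1,\ldots,f_{2n+1})$, rather than merely by a generic $(2n+1)$-dimensional subspace. My plan for this is to exhibit an explicit $z$ realising full Jacobian rank: take $z_i=\xi^{a_i}$ for well-separated positive reals $a_1>\cdots>a_{n+1}$, and analyse the leading monomial in $\xi$ of the $(2n+1)\times(2n+1)$ minor of the Jacobian obtained by deleting the $z_1$-column. The distinct-degree hypothesis $k_1>\cdots>k_{2n+1}\ge 0$ in the definition of well-conditionedness, together with the non-vanishing of the leading coefficients of the $f_j$, should then guarantee via a tropical/Newton-polytope argument that a single monomial in $\xi$ dominates this determinant with non-zero coefficient, so that the Jacobian has rank $2n+1$ for all sufficiently large $\xi$ and $\phi_{n+1}$ is dominant.
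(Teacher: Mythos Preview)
Your plan is sound and lands on the same determinant as the paper, but the framing differs. For \eqref{3.1} the paper argues exactly as you do, via transcendence degree. For \eqref{3.2}, however, the paper does not prove that $\phi_{n+1}$ is dominant. Instead it fixes $\Psi_n$ of \emph{minimal} total degree satisfying \eqref{3.1}, assumes \eqref{3.2} fails, and differentiates $\Psi_n(\sigma_{1,n+1},\ldots,\sigma_{2n+1,n+1})$ in each $z_i$ and $h_i$. The chain rule yields a homogeneous linear system in the partials $u_j=\partial\Psi_n/\partial w_j$ (evaluated at $\sigma_{\cdot,n+1}$) whose $(2n+1)\times(2n+1)$ coefficient matrix $D_n$ is, after setting $h_i=1$ and transposing, precisely your Jacobian minor with one $z$-column deleted. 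Showing $\det D_n\not\equiv 0$ forces all $u_j$ to vanish identically, contradicting minimality. Thus both arguments reduce to the same non-vanishing assertion. The paper proves it by induction on $n$: expanding along the two entries involving $z_1$, the term of top $z_1$-degree has coefficient $(k_1-k_2)c_1c_2$ times a determinant of the same shape with $n-1$ in place of $n$. Your tropical argument with $z_i=\xi^{a_i}$, $a_1\gg\cdots\gg a_{n+1}$, is the unrolled form of this induction: the optimal assignment sends row $1$ to the $f_j(z_1)$ column and rows $\{2i-2,2i-1\}$ to the two $z_i$-columns for each $i\ge 2$, and the leading coefficient in $\xi$ is (up to sign) $c_1\prod_{i\ge 2}c_{2i-2}c_{2i-1}(k_{2i-2}-k_{2i-1})\ne 0$. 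One caution: the tropical optimum is not attained by a unique permutation (for each $i\ge 2$ the two assignments of $\{2i-2,2i-1\}$ to $\{V_i,D_i\}$ tie), so your ``single monomial'' must be read as the single top power of $\xi$, whose coefficient is this non-zero product of $2\times 2$ blocks. Your route yields the slightly stronger fact that $\phi_{n+1}$ is dominant, so \eqref{3.2} holds for \emph{every} $\Psi_n$ satisfying \eqref{3.1}; the paper's minimal-degree contradiction is more elementary and sidesteps any geometric language.
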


\begin{proof}
    We apply an argument similar to that of \cite[Lemma 1]{Woo1993} based on a consideration of transcendence degrees. Let $K=\dbQ(\sig_{1,n},\ldots ,\sig_{2n+1,n})$. Then $K\subseteq \dbQ(z_1,\ldots, z_n,h_1,\ldots ,h_n)$, so that $K$ has transcendence degree at most $2n$ over $\dbQ$. It follows that the $2n+1$ polynomials $\sig_{1,n}(\bfz;\bfh),\ldots ,\sig_{2n+1,n}(\bfz;\bfh)$ cannot be algebraically independent over $\dbQ$. Consequently, there exists a non-zero polynomial $\Psi_n\in \dbZ[w_1,\ldots ,w_{2n+1}]$ satisfying the property (\ref{3.1}).\par

    It remains now only to confirm that a choice may be made for this non-trivial polynomial $\Psi_n$ in such a manner that property (\ref{3.2}) also holds. In order to establish this claim, we begin by considering any non-zero polynomial $\Psi_n$ of smallest total degree satisfying (\ref{3.1}). Suppose, if possible, that $\Psi_n(\sig_{1,n+1},\ldots ,\sig_{2n+1,n+1})$ is also identically zero. Then the polynomials
    \begin{equation}\label{3.3}
        \frac{\partial}{\partial z_i}\Psi_n(\sig_{1,n+1}(\bfz;\bfh),\ldots,\sig_{2n+1,n+1}(\bfz;\bfh))
    \end{equation}
    and
    \begin{equation}\label{3.4}
        \frac{\partial}{\partial h_i}\Psi_n(\sig_{1,n+1}(\bfz;\bfh),\ldots,\sig_{2n+1,n+1}(\bfz;\bfh))
    \end{equation}
    must also be identically zero for $1\le i\le n+1$. Write
    $$
        u_j=\frac{\partial}{\partial w_j}\Psi_n(w_1,\ldots ,w_{2n+1})\quad (1\le j\le 2n+1),
    $$
    in which we evaluate the right hand side at $w_i=\sig_{i,n+1}(\bfz;\bfh)$
$(1\le i\le 2n+1)$. Then it follows from an application of the chain rule that the vanishing
of the polynomials (\ref{3.3}) and (\ref{3.4}) implies the relations
    \begin{equation}\label{3.5}
        \sum_{j=1}^{2n+1}h_if_j'(z_i)u_j=0\quad (1\le i\le n)
    \end{equation}
    and
    \begin{equation}\label{3.6}
        \sum_{j=1}^{2n+1}f_j(z_i)u_j=0\quad (1\le i\le n+1).
    \end{equation}
    Notice here that we have deliberately omitted the index $i=n+1$ from the relations (\ref{3.5}), since this is superfluous to our needs.\par

    In order to encode the coefficient matrix associated with the system of linear equations in $\bfu$ described by the relations (\ref{3.5}) and (\ref{3.6}), we introduce a block matrix as follows. We define the $n\times (2n+1)$ matrix
    $$
        A_n=(h_if_j'(z_i))_{\substack{\\1\le i\le n\\ 1\le j\le 2n+1}}
    $$
    and the $(n+1)\times (2n+1)$ matrix
    $$
        B_n=(f_j(z_i))_{\substack{1\le i\le n+1\\ 1\le j\le 2n+1}},
    $$
    and then define the $(2n+1)\times (2n+1)$ matrix $D_n$ via the block decomposition
    $$
        D_n=\left(\begin{matrix}A_n\\B_n\end{matrix}\right).
    $$
    We claim that $\det(D_n)$ is not identically zero as a polynomial. The confirmation of this fact we defer to the end of this proof.\par

    With the assumption $\det(D_n)\ne 0$ in hand, one sees that the system of equations (\ref{3.5}) and (\ref{3.6}) has only the trivial solution $\bfu={\mathbf 0}$ over $K$. However, since $\Psi_n(\bfw)$ is a non-constant polynomial, at least one of the derivatives
    $$
        \frac{\partial}{\partial w_j}\Psi_n(w_1,\ldots ,w_{2n+1})\quad (1\le j\le 2n+1)
    $$
    must be non-zero. Suppose that the partial derivative with respect to $w_J$ is non-zero. Then there exists a non-constant polynomial
    $$
        \Psi_n^*(\bfw)=\frac{\partial}{\partial w_J}\Psi_n(w_1,\ldots ,w_{2n+1})
    $$
    having the property that, since $u_J=0$, one has
    $$
        \Psi_n^*(\sig_{1,n}(\bfz;\bfh),\ldots ,\sig_{2n+1,n}(\bfz;\bfh))=0.
    $$
    But the total degree of $\Psi_n^*$ is strictly smaller than that of $\Psi_n$, contradicting our hypothesis that $\Psi_n$ has minimal total degree. We are therefore forced to conclude that the relation (\ref{3.2}) does indeed hold.\par

We now turn to the problem of justifying our assumption that $\det(D_n)\ne 0$. We prove
this assertion for any well-conditioned $(2n+1)$-tuple of polynomials $\bff$ by induction
on $n$. Observe first that when $n=0$, one has $\det(D_0)=f_1(z_1)$. Since $f_1(z)$ is
not identically zero, it follows that $\det(D_0)\ne 0$, confirming the base case of our
inductive hypothesis. We suppose next that $n\ge 1$ and that $\det(D_{n-1})\ne 0$ for
all well-conditioned $(2n-1)$-tuples of polynomials $\bff$, and we seek to show that
$\det(D_n)\ne 0$.\par

    Denote by $\calI$ the set of all $2$-element subsets $\gra=\{a_1,a_2\}$ contained in $\calN=\{1,2,\ldots ,2n+1\}$. When $\gra=\{a_1,a_2\}\in \calI$, we define the matrices
    $$
        A(\gra)=\left( h_if_j'(z_i)\right)_{\substack{2\le i\le n\\ j\in \calN\setminus \gra}}
        \quad \text{and}\quad
        B(\gra)= \left( f_j(z_i)\right)_{\substack{2\le i\le n+1\\ j\in \calN\setminus \gra}}.
    $$
    Equipped with this notation, we define the minors
    $$
        U(\gra)=\det \left( \begin{matrix} h_1f_{a_1}'(z_1)&h_1f_{a_2}'(z_1)\\ f_{a_1}(z_1)&f_{a_2}(z_1)\end{matrix}\right)
        \quad \text{and}\quad
        V(\gra)= \det\left( \begin{matrix}A(\gra)\\ B(\gra)\end{matrix}\right).
    $$
    In this way, we discern that for appropriate choices of $\sig(\gra)\in \{1,-1\}$, the precise nature of which need not detain us, one has
    $$
        \det(D_n)=\sum_{\gra\in \calI}(-1)^{\sig(\gra)}U(\gra)V(\gra).
    $$

    \par By relabelling indices and then applying the inductive hypothesis for the
$(2n-1)$-tuple $(f_3,\ldots ,f_{2n+1})$, it is apparent that $V(\{1,2\})$ is not identically
zero. Moreover, if the leading coefficients of $f_1$ and $f_2$ are $c_1$ and $c_2$,
respectively, then the leading monomial in $U(\{1,2\})$ is
    $$
        (k_1-k_2)c_1c_2h_1z_1^{k_1+k_2-1}\ne 0.
    $$
It follows that $U(\{1,2\})$ is also not identically zero. Since no other minor of the
shape $U(\gra)$, with $\gra \in \calI$ and $\gra\ne \{1,2\}$, has degree $k_1+k_2-1$ or
greater with respect to $z_1$, we deduce that $\det(D_n)$ is not identically zero. This
confirms the inductive hypothesis for the index $n$ and completes the proof of our claim
for all $n$.
\end{proof}

Henceforth, when $n\ge 1$, we fix a choice for the polynomials
$\Psi_n(\bfw)\in \dbZ[w_1,\ldots,w_{2n+1}]$, of minimal total degree, satisfying the
conditions (\ref{3.1}) and (\ref{3.2}). It is useful to extend this definition by taking
$\Psi_0(w)=w$. We may now establish our fundamental polynomial identity.

\begin{lemma}\label{lemma3.2}
    Suppose that $n\ge 0$ and the $(2n+1)$-tuple $\bff=(f_1,\ldots ,f_{2n+1})$ of
polynomials in $\dbZ[x]$ is well-conditioned. Then there exists a non-zero polynomial
$\Phi_n(\bfz;\bfh)\in \dbZ[\bfz,\bfh]$ with the property that
    \begin{equation}\label{3.8}
        \Psi_n(\sig_{1,n+1}(\bfz;\bfh),\ldots ,\sig_{2n+1,n+1}(\bfz;\bfh))=\Phi_n(\bfz;\bfh)
        h_1\cdots h_{n+1}\prod_{1\le i<j\le n+1}(z_i-z_j).
    \end{equation}
\end{lemma}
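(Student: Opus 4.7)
The plan is to identify the product $h_1\cdots h_{n+1}\prod_{1\le i<j\le n+1}(z_i-z_j)$ as a forced divisor of the left-hand side of (\ref{3.8}), by exhibiting suitable specializations that reduce $\sig_{j,n+1}$ to $\sig_{j,n}$ and thereby trigger the vanishing property (\ref{3.1}). Once divisibility by each irreducible factor is established, unique factorization in $\dbZ[\bfz,\bfh]$ yields the integer-coefficient quotient $\Phi_n$, and (\ref{3.2}) supplies its nonvanishing.

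First I would establish divisibility by $h_i$ for each $1\le i\le n+1$. Setting $h_i=0$ in
$$\sig_{j,n+1}(\bfz;\bfh)=\sum_{l=1}^{n+1}h_lf_j(z_l)$$
deletes one summand, so that after relabelling the remaining $n$ pairs $(h_l,z_l)$ as fresh indeterminates $(h_l',z_l')$ with $1\le l\le n$, what is left is precisely $\sig_{j,n}(\bfz';\bfh')$. By (\ref{3.1}) applied to $(\bfz';\bfh')$, the left-hand side of (\ref{3.8}) vanishes identically modulo $h_i$. Viewing this left-hand side as a polynomial in $h_i$ with coefficients in $\dbZ[\bfz,\bfh\setmin\{h_i\}]$, its constant term vanishes and hence $h_i$ divides it in $\dbZ[\bfz,\bfh]$.

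Next I would establish divisibility by each factor $z_i-z_j$ with $1\le i<j\le n+1$. Specialising $z_j=z_i$ collapses the two summands $h_if_l(z_i)+h_jf_l(z_j)$ into $(h_i+h_j)f_l(z_i)$, so that $\sig_{l,n+1}$ becomes an $n$-term sum of the form $\sig_{l,n}$ with the coefficient $h_i$ replaced by $h_i+h_j$. Applying (\ref{3.1}) to this $n$-variable configuration again gives zero, so the left-hand side of (\ref{3.8}) lies in the prime ideal $(z_i-z_j)$, and hence is divisible by $z_i-z_j$.

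Finally, since $\dbZ[z_1,\ldots,z_{n+1},h_1,\ldots,h_{n+1}]$ is a unique factorization domain and the linear polynomials $h_1,\ldots,h_{n+1}$ together with $\{z_i-z_j\}_{1\le i<j\le n+1}$ constitute a family of pairwise non-associate primes, their product divides the left-hand side, and the quotient $\Phi_n(\bfz;\bfh)$ lies in $\dbZ[\bfz,\bfh]$. The relation (\ref{3.2}) guarantees that the left-hand side is not identically zero, whence $\Phi_n\not\equiv 0$, as required. The only matter demanding care is the bookkeeping in the two specializations to ensure that the resulting sums genuinely match $\sig_{j,n}$ up to a legitimate change of variables; no conceptual obstacle arises beyond this cosmetic check.
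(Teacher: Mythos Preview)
Your proposal is correct and follows essentially the same approach as the paper: specialise $h_i=0$ and $z_i=z_j$ to reduce $\sig_{l,n+1}$ to an instance of $\sig_{l,n}$, invoke (\ref{3.1}) to obtain divisibility by each linear factor, and then appeal to (\ref{3.2}) for the nonvanishing of the quotient. The only minor difference is that the paper treats the case $n=0$ separately (since (\ref{3.1}) is established only for $n\ge 1$ and $\Psi_0(w)=w$ is defined by fiat), whereas you fold it into the general argument; you should note that edge case explicitly, and your invocation of unique factorisation to pass from divisibility by each prime to divisibility by the product is a welcome clarification that the paper leaves implicit.
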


\begin{proof} In the case $n=0$, the product over $i$ and $j$ on the right hand side of
\eqref{3.8} is empty, and by convention we take this empty product to be $1$. In such
circumstances, we see that $\Psi_0(\sig_{1,1}(z_1;h_1))=h_1f_1(z_1)$, and the
conclusion of the lemma is immediate.\par

Suppose next that $n\ge 1$. Then, when $h_{n+1}=0$, we have
    $$
        \sig_{i,n+1}(\bfz;\bfh)=\sig_{i,n}(\bfz;\bfh)\quad (1\le i\le 2n+1),
    $$
    and thus we deduce from property (\ref{3.1}) of Lemma \ref{lemma3.1} that in this situation, one has
    \begin{equation}\label{3.9}
        \Psi_n(\sig_{1,n+1}(\bfz;\bfh),\ldots ,\sig_{2n+1,n+1}(\bfz;\bfh))=0.
    \end{equation}
    It follows that $h_{n+1}$ divides $\Psi_n(\sig_{1,n+1}(\bfz;\bfh),\ldots ,\sig_{2n+1,n+1}(\bfz;\bfh))$, and by symmetry the
    same holds for $h_1,\ldots ,h_n$. Meanwhile, when $z_n=z_{n+1}$, we have
    $$
        \sig_{i,n+1}(\bfz;\bfh)=\sig_{i,n}(\bfz;h_1,\ldots ,h_{n-1},h_n+h_{n+1}),
    $$
and again we find from property (\ref{3.1}) of Lemma \ref{lemma3.1} that in this special
situation one has (\ref{3.9}). We thus conclude that $z_n-z_{n+1}$ divides the polynomial
$\Psi_n(\sig_{1,n+1}(\bfz;\bfh),\ldots ,\sig_{2n+1,n+1}(\bfz;\bfh))$, and by symmetry
the same holds for $z_i-z_j$ whenever $1\le i<j\le n+1$.\par

    In light of these observations, it is apparent that
    $$
        \Psi_n(\sig_{1,n+1}(\bfz;\bfh),\ldots ,\sig_{2n+1,n+1}(\bfz;\bfh))
    $$
    is divisible by
    $$
        h_1\cdots h_{n+1}\prod_{1\le i<j\le n+1}(z_i-z_j).
    $$
    The quotient of the former polynomial by the latter cannot be zero, since this former polynomial is non-zero, by virtue of property (\ref{3.2}) of Lemma \ref{lemma3.1}. We therefore conclude that a non-zero polynomial $\Phi_n(\bfz;\bfh)\in \dbZ[\bfz,\bfh]$ does indeed exist satisfying (\ref{3.8}). This completes the proof of the lemma.
\end{proof}

It seems quite likely that additional potentially useful structure might be extracted from the polynomial identities provided by Lemma \ref{lemma3.2}. For example, the relation
$$
    (h_1+h_2)(h_1z_1^2+h_2z_2^2)-(h_1z_1+h_2z_2)^2=h_1h_2(z_1-z_2)^2
$$
plays a prominent role in the proof of \cite[Lemma 2.1]{Woo2015}. Meanwhile, writing
$$
    s_j=h_1z_1^j+h_2z_2^j+h_3z_3^j\quad (0\le j\le 4),
$$
one may verify that
\begin{align*}
    (s_1s_4-s_2s_3)^2(s_0s_2-s_1^2)-&(s_0s_4-s_2^2)(s_1s_3-s_2^2)^2\\
    &=h_1h_2h_3(z_1-z_2)^2(z_2-z_3)^2(z_3-z_1)^2F_{6,3}(\bfz;\bfh),
\end{align*}
for a suitable bihomogeneous polynomial $F_{6,3}(\bfz;\bfh)\in \dbZ[\bfz,\bfh]$, of degree $6$ with respect to $\bfz$ and degree $3$ with respect to $\bfh$.

\section{Counting integral solutions pairwise} The polynomial identity furnished by Lemma
\ref{lemma3.2} is of multiplicative type, and particularly powerful when
$\Psi_n(\sig_{1,n+1},\ldots ,\sig_{2n+1,n+1})$ is non-zero for a fixed integral choice of
$\bfz$ and $\bfh$, for then we may exploit elementary estimates for the divisor function.
However, it is possible that the latter quantity
vanishes. This brings us into the domain of the classification of solutions according to the
vanishing or non-vanishing of various intermediate coefficients. We begin with an
elementary lemma concerning polynomials in two variables similar to
\cite[Lemma 2]{Woo1993}, the proof of which we include for the sake of
completeness.

\begin{lemma}\label{lemma4.1}
    Let $\psi \in \dbZ[z,h]$ be a non-trivial polynomial of total degree $d$. Then the
number of integral solutions of the equation $\psi(z,h)=0$ with $|z|\le X$ and
$|h|\le X^r$ is at most $2d(2X^r+1)$.
\end{lemma}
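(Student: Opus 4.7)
The plan is to decouple the two variables by a standard Schwartz--Zippel-style case split, viewing $\psi$ alternately as a polynomial in $h$ with coefficients in $\dbZ[z]$. Specifically, I would write
$$
    \psi(z,h)=\sum_{j=0}^{d}c_j(z)h^j,
$$
where each $c_j\in \dbZ[z]$ has degree at most $d$. Since $\psi$ is non-trivial, at least one coefficient $c_{j_0}$ is a non-zero polynomial in $z$, and this $c_{j_0}$ has at most $d$ integer zeros.

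Next I would split the set of admissible values of $z$ into two classes. Call $z$ \emph{singular} if $c_{j_0}(z)=0$. There are at most $d$ singular integers, and for each such $z$ we bound the number of admissible $h$ trivially by $2X^r+1$. For any other (\emph{regular}) integer $z$ with $|z|\le X$, the univariate polynomial $\psi(z,\cdot)\in \dbZ[h]$ is non-zero and has degree at most $d$ in $h$, hence admits at most $d$ integer roots $h$; summing over the at most $2X+1$ regular values of $z$ gives at most $d(2X+1)$ further solutions.

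Combining the two contributions, the total number of integral solutions with $|z|\le X$ and $|h|\le X^r$ is at most
$$
    d(2X^r+1)+d(2X+1)=d(2X^r+2X+2).
$$
Since $r\ge 1$, one has $X\le X^r$, whence $2X^r+2X+2\le 2(2X^r+1)$, delivering the stated bound $2d(2X^r+1)$. There is no real obstacle here, beyond taking care to use the variable with the smaller range (namely $z$) as the one whose singular locus is handled trivially, so that the dominant contribution is the regular one of size $d(2X+1)$ rather than $d(2X^r+1)$; the final inequality $2X+1\le 2X^r+1$, valid because $r\ge 1$, is what makes the clean form $2d(2X^r+1)$ come out.
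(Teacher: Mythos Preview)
Your proof is correct and follows essentially the same approach as the paper's: both write $\psi$ as a polynomial in $h$ with coefficients $c_j(z)\in\dbZ[z]$, split the admissible $z$ into a ``bad'' set of size at most $d$ (handled trivially in $h$) and a ``good'' set where $\psi(z,\cdot)$ is non-trivial (giving at most $d$ roots in $h$), and then use $X\le X^r$ to collapse the sum to $2d(2X^r+1)$. The only cosmetic difference is that the paper takes the bad set to be those $z$ where \emph{all} $c_j$ vanish, whereas you take it to be the zero set of a single non-zero $c_{j_0}$; both choices give the same bounds.
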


\begin{proof}
We may write $\psi(z,h)=a_d(z)h^d+\ldots +a_1(z)h+a_0(z)$, with $a_i\in \dbZ[z]$ of
degree at most $d$ for $0\le i\le d$. The solutions to be counted are of two types. Firstly,
one has solutions $(z, h)$ with $|z|\le X$ for which $a_i(z)\ne 0$ for some index $i$, and
secondly one has solutions for which $a_i(z)=0$ $(0\le i\le d)$. Given any fixed one of the
(at most) $2X+1$ possible choices of $z$ in a solution of the first type, one finds that $h$
satisfies a non-trivial polynomial equation of degree at most $d$, to which there are at
most $d$ integral solutions. There are consequently at most $d(2X+1)$ solutions of this
first type. On the other hand, whenever $(z, h)$ is a solution of the second type, then
$z$ satisfies some non-trivial polynomial equation $a_i(z)=0$ of degree at most $d$. Since
this equation has at most $d$ integral solutions and there are at most $2X^r+1$ possible
choices for $h$, one has at most $d(2X^r+1)$ solutions of the second type. The
conclusion of the lemma now follows.
\end{proof}

We now announce an initial classification of intermediate coefficients. We define sets
$\calT_{n,m}\subseteq \dbZ[z_1,\ldots ,z_m,h_1,\ldots ,h_m]$ for $0\le m\le n+1$
inductively as follows. First, let $\calT_{n,n+1}$ denote the singleton set containing the
polynomial
\begin{equation}\label{4.1}
    \Psi_n(\sig_{1,n+1}(\bfz;\bfh),\ldots ,\sig_{2n+1,n+1}(\bfz;\bfh)).
\end{equation}
Next, suppose that we have already defined the set $\calT_{n,m+1}$, and consider an
element $\psi\in \calT_{n,m+1}$. We may interpret $\psi$ as a polynomial in $z_{m+1}$
and $h_{m+1}$ with coefficients $\phi(z_1, \dots, z_m; h_1, \dots, h_m)$. We now define
$\calT_{n,m}$ to be the set of all non-zero polynomials
$\phi \in \dbZ[z_1,\dots,z_m,h_1, \dots,h_m]$ occurring as coefficients of elements
$\psi \in \calT_{n,m+1}$ in this way. Note in particular that since the polynomial
(\ref{4.1}) is not identically zero, it is evident that each set $\calT_{n,m}$ is non-empty.\par

This classification of coefficients yields a consequence of Lemma \ref{lemma4.1} of
utility to us in \S6.

\begin{lemma}\label{lemma4.2}
    Let $m$ and $n$ be natural numbers with
    $1\le m\le n\le t$. Suppose that $z_i$ and $h_i$ are fixed integers for $1\le i\le m$ with
    $1\le z_i\le X$ and $|h_i|\le X^r$. Suppose also that there exists $\phi\in \calT_{n,m}$
    having the property that $\phi(z_1,\ldots ,z_m;h_1,\ldots ,h_m)\ne 0$. Then the number
    $N_m(X)$ of integral solutions of the system of equations
    $$
        \psi(z_1,\ldots ,z_{m+1};h_1,\ldots ,h_{m+1})=0\quad (\psi\in \calT_{n,m+1}),
    $$
    with $1\le z_{m+1}\le X$ and $|h_{m+1}|\le X^r$, satisfies $N_m(X)\ll X^r$.
\end{lemma}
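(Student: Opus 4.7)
The plan is to reduce Lemma \ref{lemma4.2} directly to Lemma \ref{lemma4.1} by unwinding the definition of $\calT_{n,m}$. First, I would recall that $\calT_{n,m}$ was built by writing each $\psi\in \calT_{n,m+1}$ as a polynomial in the pair of variables $(z_{m+1},h_{m+1})$ with coefficients in $\dbZ[z_1,\ldots,z_m,h_1,\ldots,h_m]$, and collecting all non-zero such coefficients into $\calT_{n,m}$. Thus, given the $\phi\in \calT_{n,m}$ furnished by the hypothesis, there exists some $\psi\in \calT_{n,m+1}$ and some pair of non-negative integers $(a,b)$ such that $\phi$ appears as the coefficient of $z_{m+1}^a h_{m+1}^b$ when $\psi$ is viewed as a polynomial in $(z_{m+1},h_{m+1})$.

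Next, I would substitute the fixed integers $z_1,\ldots,z_m,h_1,\ldots,h_m$ into $\psi$. The resulting object is a polynomial $\tilde\psi(z_{m+1},h_{m+1})\in\dbZ[z_{m+1},h_{m+1}]$, and its coefficient of $z_{m+1}^a h_{m+1}^b$ is precisely $\phi(z_1,\ldots,z_m;h_1,\ldots,h_m)$, which is non-zero by assumption. Hence $\tilde\psi$ is a non-trivial polynomial in the two variables $(z_{m+1},h_{m+1})$, and its total degree is bounded by a constant $d$ depending at most on $n$, $\bfk$ and the coefficients of $\bff$, since this is true of the original $\psi\in \calT_{n,m+1}$ (which inherits such a bound by induction from the single element \eqref{4.1} of $\calT_{n,n+1}$, whose total degree is controlled through Lemma \ref{lemma3.1}).

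Finally, any solution $(z_{m+1},h_{m+1})$ counted by $N_m(X)$ must in particular satisfy $\tilde\psi(z_{m+1},h_{m+1})=0$ with $1\le z_{m+1}\le X$ and $|h_{m+1}|\le X^r$. An application of Lemma \ref{lemma4.1} to $\tilde\psi$ (whose degree is bounded independently of $X$) shows that the number of such pairs is at most $2d(2X^r+1)\ll X^r$, which yields the desired bound. There is no real obstacle here beyond the bookkeeping of the inductive definition of $\calT_{n,m}$; the whole point of constructing the sets $\calT_{n,m}$ in exactly this fashion is precisely to guarantee that whenever some coefficient at level $m$ is non-vanishing at the fixed point, one can locate an explicit two-variable polynomial equation at level $m+1$ to which Lemma \ref{lemma4.1} applies.
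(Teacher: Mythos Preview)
Your proof is correct and follows essentially the same approach as the paper: you locate a $\psi\in\calT_{n,m+1}$ having $\phi$ as a coefficient, substitute the fixed values of $z_1,\ldots,z_m,h_1,\ldots,h_m$ to obtain a non-trivial two-variable polynomial in $(z_{m+1},h_{m+1})$, and then apply Lemma~\ref{lemma4.1}. Your version is slightly more explicit about the degree bound and the monomial witnessing non-triviality, but the argument is the same.
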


\begin{proof}
    It follows from the iterative definition of the sets $\calT_{n,m}$ that any element $\phi\in \calT_{n,m}$ occurs as a coefficient polynomial of an element $\psi \in \calT_{n,m+1}$, when viewed as a polynomial in $h_{m+1}$ and $z_{m+1}$. Fixing any one such
    polynomial $\psi$, we find that for the fixed choice of $z_1,\ldots ,z_m,h_1,\ldots ,h_m$
    presented by the hypotheses of the lemma, the polynomial $\psi(\bfz;\bfh)$ is a non-trivial
    polynomial in $z_{m+1}$, $h_{m+1}$. We therefore conclude from Lemma
    \ref{lemma4.1} that $N_m(X)\ll X^r$. This
completes the proof of the lemma.
\end{proof}

\section{Classification of solutions}
We now address the classification of the set $\calS$ of
all solutions of the system of equations
\begin{equation}\label{5.1}
    \sig_{j,2s}(\bfz;\bfh)=0\quad (1\le j\le 2s-1),
\end{equation}
with $1\le \bfz\le X$ and $|\bfh|\le X^r$. This we execute in two stages. Our discussion is
eased by the use of some non-standard notation. When $(i_1,\ldots ,i_m)$ is an
$m$-tuple of positive integers with $1\le i_1<\ldots <i_m\le 2s$, we abbreviate
$(z_{i_1},\ldots ,z_{i_m})$ to $\bfz_\bfi$ and $(h_{i_1},\ldots ,h_{i_m})$ to $\bfh_\bfi$.\par

In the first stage of our classification, when $0\le n<s$, we say that $(\bfz,\bfh)\in
\calS$ is of type $S_n$ when:
\begin{enumerate}
    \item[(i)]
        for all $(n+1)$-tuples $(i_1,\ldots ,i_{n+1})$ with $1\le i_1<\ldots <i_{n+1}
        \le 2s$, one has
        $$
            \Psi_n(\sig_{1,n+1}(\bfz_\bfi;\bfh_\bfi),\ldots ,\sig_{2n+1,n+1}(\bfz_\bfi;\bfh_\bfi))=0,
        $$
        and
    \item[(ii)]
        for some $n$-tuple $(j_1,\ldots ,j_n)$ with $1\le j_1<\ldots <j_n\le 2s$, one has
        $$
            \Psi_{n-1}(\sig_{1,n}(\bfz_\bfj;\bfh_\bfj),\ldots ,\sig_{2n-1,n}(\bfz_\bfj;\bfh_\bfj))\ne 0.
        $$
\end{enumerate}
Here, we interpret the condition (ii) to be void when $n=0$. Finally, we say that
$(\bfz,\bfh)\in \calS$ is of type $S_s$ when the condition (ii) holds with $n=s$. It follows
that every solution $(\bfz,\bfh)\in \calS$ is of type $S_n$ for some index $n$ with
$0\le n\le s$. We denote the set of all solutions of type $S_n$ by $\calS_n$.\par

In the second stage of our classification, when $1\le n<s$ we subdivide the solutions
$(\bfz,\bfh)\in \calS_n$ as follows. When $0\le m\le n$, we say that a solution
$(\bfz,\bfh)\in \calS_n$ is of type $T_{n,m}$ when condition (ii) holds for the $n$-tuple
$\bfj$, and:
\begin{enumerate}
    \item[(iii)]
        for all $(m+1)$-tuples $(i_1,\ldots ,i_{m+1})$ with $1\le i_1<\ldots <i_{m+1}
        \le 2s$ and $i_l\not\in \{j_1,\ldots ,j_n\}$ $(1\le l\le m+1)$, and for all
        $\psi\in \calT_{n,m+1}$, one has $\psi(\bfz_\bfi;\bfh_\bfi)=0$, and
    \item[(iv)] for some $m$-tuple $(\iota_1,\ldots ,\iota_m)$ with $1\le \iota_1<\ldots
        <\iota_m\le 2s$ and $\iota_l\not\in \{ j_1,\ldots ,j_n\}$ $(1\le l\le m)$, and for some
        $\phi\in \calT_{n,m}$, one has $\phi(\bfz_\bfiota;\bfh_\bfiota)\ne 0$.
\end{enumerate}
Here, we interpret the condition (iv) to be void when $m=0$. It follows that whenever
$(\bfz,\bfh)\in \calS_n$ with $1\le n<s$, then it is of type $T_{n,m}$ for some index
$m$ with $0\le m\le n$. As before, we introduce the notation $\calS_{n,m}$ to denote the set of all solutions of type $T_{n,m}$. We thus have the decomposition
\begin{equation}\label{5.2}
    \calS = \calS_0\cup \calS_s \cup \bigcup_{n=1}^{s-1} \bigcup_{m=0}^n \calS_{n,m}.
\end{equation}

\section{A divisor estimate}
Having enunciated our classification of solutions in the previous
section, we are equipped to estimate the number of solutions of the system (\ref{5.1})
with $1\le \bfz\le X$ and $|\bfh|\le X^r$. This will establish Theorem \ref{theorem2.1},
since by discarding superfluous equations if necessary, we may always suppose that
$t=2s-1$. Before embarking on the main argument, we establish a simple auxiliary result.

\begin{lemma}\label{lemma6.1} Suppose that $f \in \dbZ[x]$ is a polynomial of degree
$k\ge 1$. Let $u$ be an integer with $1\le u\le k$, and let $h_i$ and $a_i$ be fixed
integers for $1\le i\le u$ with $\bfh\ne {\mathbf 0}$ and $a_i\ne a_j$ $(1\le i<j\le u)$.
Then for any integer $n$, the equation
\begin{equation}\label{6.a}
        \sum_{i=1}^u h_i f(z+a_i) = n
\end{equation}
    has at most $k$ solutions in $z$.
\end{lemma}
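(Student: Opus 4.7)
The plan is to show that the left-hand side of \eqref{6.a}, viewed as a polynomial in $z$, is non-constant and of degree at most $k$; then the equation reduces to a single non-trivial polynomial equation of degree at most $k$, which has at most $k$ integer roots.

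First I would Taylor-expand each shifted polynomial, writing
$$f(z+a_i)=\sum_{j=0}^{k}\frac{f^{(j)}(z)}{j!}a_i^{j},$$
and then interchange the sums to obtain
$$g(z):=\sum_{i=1}^{u}h_if(z+a_i)=\sum_{j=0}^{k}\frac{f^{(j)}(z)}{j!}S_j,\quad \text{where }S_j=\sum_{i=1}^{u}h_ia_i^{j}.$$
Since $\deg f^{(j)}=k-j$ for $0\le j\le k$, the polynomials $f^{(j)}(z)/j!$ have distinct degrees and are therefore linearly independent over $\dbQ$. Consequently $\deg g=k-j_0$, where $j_0$ is the smallest index for which $S_{j_0}\ne 0$ (and $g\equiv 0$ if no such $j_0$ exists).

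It now suffices to prove that $S_j\ne 0$ for some $j$ with $0\le j\le k-1$, for then $\deg g\ge 1$ and also $\deg g\le k$, so $g(z)=n$ has at most $k$ solutions in $z$. The main (and only) nontrivial step is a Vandermonde argument: suppose towards a contradiction that $S_0=S_1=\ldots=S_{u-1}=0$. This is a homogeneous linear system in $\bfh=(h_1,\ldots,h_u)$ whose coefficient matrix $(a_i^{j})_{1\le i\le u,\,0\le j\le u-1}$ is a Vandermonde matrix, invertible because the $a_i$ are pairwise distinct. Hence $\bfh=\mathbf 0$, contradicting the hypothesis $\bfh\ne\mathbf 0$.

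Because $u\le k$, the range $\{0,1,\ldots,u-1\}$ is contained in $\{0,1,\ldots,k-1\}$, so the contradiction above yields an index $j\in\{0,\ldots,k-1\}$ with $S_j\ne 0$. Thus $1\le\deg g\le k$, and the conclusion follows. The only place the hypothesis $u\le k$ is used is in ensuring that a non-vanishing $S_j$ can be located strictly below the top degree, and this is precisely the subtlety worth flagging in the write-up.
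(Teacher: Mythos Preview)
Your proof is correct and follows the same underlying strategy as the paper: show that $g(z)=\sum_i h_i f(z+a_i)$ is a non-constant polynomial of degree at most $k$, by reducing to a Vandermonde system in the power sums $S_j=\sum_i h_i a_i^j$ and invoking the distinctness of the $a_i$ together with $\bfh\ne\mathbf 0$.

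The one difference worth noting is organisational. The paper expands $F(z)=\sum_i h_i f(z+a_i)$ in the monomial basis $\{z^i\}$, obtaining coefficients $d_i=\sum_{j\ge i}c_j\binom{j}{i}S_{j-i}$; because each $d_i$ mixes several power sums, the paper then peels these off inductively (from $d_k$ down) to conclude that the vanishing of all $d_i$ with $i\ge 1$ forces $S_0=\ldots=S_{k-1}=0$. Your Taylor expansion around $z$ instead writes $g$ in the basis $\{f^{(j)}(z)/j!\}$, whose elements already have pairwise distinct degrees, so the coefficients are the $S_j$ themselves and no induction is needed: the smallest $j$ with $S_j\ne 0$ directly pins down $\deg g$. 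This is a genuine streamlining of the same argument, and the use of $u\le k$ to guarantee a non-vanishing $S_j$ with $j\le k-1$ is exactly the point the paper's induction is working towards.
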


\begin{proof}
It suffices to show that the polynomial in $z$ on the left hand side of (\ref{6.a}) has
positive degree. We therefore assume the opposite and seek a contradiction.
Suppose that $f$ is given by
    $$
        f(z) = c_k z^k + c_{k-1}z^{k-1} + \ldots + c_1 z + c_0,
    $$
    where $c_k \neq 0$. The polynomial on the left hand side of (\ref{6.a}) takes the
shape
    $$
        F(z) = d_k z^k + d_{k-1}z^{k-1} + \ldots + d_1 z + d_0,
    $$
    with
    $$
        d_i = \sum_{j=i}^k  c_j\binom{j}{i}(h_1a_1^{j-i}+\ldots +h_ua_u^{j-i})\quad
(0\le i\le k).
    $$
    In particular, we see directly that $d_k$ can vanish only if $h_1+\ldots +h_u=0$.
Let $i$ be a positive integer with $i<k$, and suppose that one has
    \begin{equation}\label{6.0}
        h_1a_1^{k-j}+\ldots +h_ua_u^{k-j}=0
    \end{equation}
for all integers $j$ with $i <j \le k$. Then the vanishing of $d_i$ implies that \eqref{6.0}
holds also for $j=i$. Proceeding inductively in this way, we deduce that \eqref{6.0} is
satisfied for the entire range $1\le j \le k$. Restricting attention to the system of
equations with indices $k-u+1 \le j \le k$, we find that this system of equations can hold
simultaneously only when either $\bfh={\mathbf 0}$, or else
    $$
        0=\det(a_i^{j-1})_{1\le i,j\le u}=\prod_{1\le i<j\le u}(a_i-a_j).
    $$
In the latter case, one has $a_i=a_j$ for some indices $i$ and $j$ with $1\le i<j\le u$.
Both these cases are excluded by the hypotheses of the statement of the lemma, so the
system of equations \eqref{6.0} cannot hold for all $1 \le j \le k$, and hence the
polynomial $F$ is non-trivial of positive degree. Consequently, the equation (\ref{6.a}) has
at most $\text{deg}(F)\le k$ solutions in $z$.
\end{proof}

\begin{proof}[The proof of Theorem \ref{theorem2.1}] We begin by examining the
solutions of (\ref{5.1}) of type $S_0$, recalling that $1\le \bfz\le X$ and $|\bfh|\le X^r$.
When $(\bfz,\bfh)\in \calS_0$, one has $h_if_1(z_i)=0$ for $1\le i\le 2s$. Suppose that
the indices $i$ for which $h_i=0$ are $i_1,\ldots,i_a$, and the indices $j$ for which
$h_j\ne 0$ are $j_1,\ldots ,j_b$. In
particular, one has $a+b=2s$. By relabelling variables, if necessary, there is no loss of
generality in supposing that $\bfj=(1,\ldots ,b)$ and $\bfi=(b+1,\ldots ,2s)$. There are
$O(X^{2s-b})$ possible choices for $h_i$ and $z_i$ with $b+1\le i\le 2s$, since $h_i=0$
for these indices $i$. Meanwhile, for $1\le j\le b$, one has $f_1(z_j)=0$, and so there are
at most $k_1$ possible choices for $z_j$. For each fixed such choice, since the
polynomials $f_1,\ldots ,f_t$ are well-conditioned, we find that $f_l(z_j)\ne 0$ for some
index $l$ with $2\le l\le t$. Thus, the variables $h_1,\ldots ,h_b$ satisfy a system of $t$
linear equations in which there are non-vanishing coefficients. We deduce that when
$b\ge 1$, there are $O((X^r)^{b-1})$ possible choices for $h_j$ and $z_j$ with
$1\le j\le b$. Finally, combining these estimates for all possible choices of $\bfi$ and
$\bfj$, we discern that
\begin{equation}\label{6.b}
    \card \calS_0\ll X^{2s}+\sum_{b=1}^{2s}X^{2s-b}\cdot X^{r(b-1)}\ll X^{(2s-1)r+1}.
\end{equation}
\par

Next we consider the solutions of (\ref{5.1}) of type $S_s$. When $(\bfz,\bfh) \in \calS_s$, there is an $s$-tuple $\bfi$ with $1\le i_1<\ldots <i_s\le 2s$ for which one has
$$
    \Psi_{s-1}(\sig_{1,s}(\bfz_\bfi;\bfh_\bfi),\ldots ,\sig_{2s-1,s}(\bfz_\bfi;\bfh_\bfi))\ne 0.
$$
Write $\bfi'$ for the $s$-tuple $(i'_1,\ldots ,i'_s)$ with $1\le i'_1<\ldots <i'_s\le 2s$ for
which
$$
    \{i_1,\ldots ,i_s\}\cup \{i'_1,\ldots ,i'_s\}=\{1,2,\ldots ,2s\}.
$$
It follows from (\ref{5.1}) that $\sig_{j,s}(\bfz_\bfi;\bfh_\bfi)=\sig_{j,s}(\bfz_{\bfi'};
-\bfh_{\bfi'})$ $(1\le j\le 2s-1)$, and hence there is a non-zero integer
$N=N(\bfz_{\bfi'};\bfh_{\bfi'})$ for which
\begin{equation}\label{6.1}
    \Psi_{s-1}(\sig_{1,s}(\bfz_{\bfi'};-\bfh_{\bfi'}),\ldots
    ,\sig_{2s-1,s}(\bfz_{\bfi'};-\bfh_{\bfi'}))=N
\end{equation}
and
$$
    \Psi_{s-1}(\sig_{1,s}(\bfz_\bfi;\bfh_\bfi),\ldots ,\sig_{2s-1,s}(\bfz_\bfi;\bfh_\bfi))=N.
$$
By relabelling variables, if necessary, there is no loss of generality in supposing that
$\bfi=(1,2,\ldots ,s)$ and $\bfi'=(s+1,s+2,\ldots ,2s)$.\par

Fix any one of the $O(X^{(r+1)s})$ possible choices for $\bfz_{\bfi'}$, $\bfh_{\bfi'}$
with $1\le \bfz_{\bfi'}\le X$, $|\bfh_{\bfi'}|\le X^r$, and satisfying (\ref{6.1}). Then we
infer from Lemma \ref{lemma3.2} that
\begin{equation}\label{6.2}
    h_1\cdots h_s\prod_{1\le i<j\le s}(z_i-z_j)\quad \text{divides}\quad
    N(\bfz_{\bfi'};\bfh_{\bfi'}).
\end{equation}
Moreover, one has $N(\bfz_{\bfi'};\bfh_{\bfi'})\ne 0$. Since the latter integer is fixed, we
see by means of an elementary divisor function estimate that there are $O(X^\eps)$
possible choices for $h_1,\ldots ,h_s$ and integers $a_2,\ldots ,a_s$ with the property
that $z_i=z_1+a_i$ $(2\le i\le s)$. With the exception of the undetermined variable $z_1$,
it follows that there are at most $O(X^{(r+1)s+\eps})$ possible choices for all the
variables in question. However, the integer $z_1$ satisfies the system of equations
\begin{equation}\label{6.3}
    h_1f_j(z_1)+\sum_{i=2}^sh_if_j(z_1+a_i)=n_j\quad (1\le j\le 2s-1),
\end{equation}
in which $h_i$, $a_i$ and $n_j$ are all fixed for all indices $i$ and $j$.  Consider the polynomial with index $j=1$ of largest degree $k_1\ge 2s-2$.
If $a_i$ is zero for any index $i$, then we have $z_1=z_i$. Meanwhile, if $a_i=a_j$ for any
indices $i$ and $j$ with $2\le i<j\le s$, one sees that $z_i=z_j$. Consequently, in either of
these scenarios, and also in the situation with $\bfh={\mathbf 0}$, one finds via
(\ref{6.2}) that $N(\bfz_{\bfi'};\bfh_{\bfi'})=0$, contradicting our assumption that
$N(\bfz_{\bfi'};\bfh_{\bfi'})\ne 0$.
We may thus safely assume that the conditions of Lemma \ref{lemma6.1} are satisfied
for the polynomial $f_1$ with $a_1=0$. By the conclusion of the lemma, it follows that
there are at most $k_1$ choices for $z_1$ satisfying (\ref{6.3}), and hence
\begin{equation}\label{6.3a}
    \card \calS_s\ll X^{(r+1)s+\eps}.
\end{equation}

\par Next we consider the set $\calS_{n,m}$  for a given pair of indices $n$ and $m$ with
$1\le n<s$ and $0\le m\le n$. For any $(\bfz,\bfh) \in \calS_{n,m}$, condition (ii) holds
for some $n$-tuple $\bfj$. By relabelling variables, if necessary, we may suppose that
$\bfj=(1,\ldots ,n)$. Write $\bfj'$ for the $(2s-n)$-tuple $(n+1,\ldots ,2s)$. Then given any one fixed choice of
the variables $\bfz_{\bfj'}$, $\bfh_{\bfj'}$, we have
\begin{align*}
    \Psi_{n-1}(\sig_{1,n}(\bfz_\bfj;\bfh_\bfj)&,\ldots ,\sig_{2n-1,n}(\bfz_\bfj;\bfh_\bfj))\\
    &=\Psi_{n-1}(\sig_{1,2s-n}(\bfz_{\bfj'};-\bfh_{\bfj'}),\ldots ,\sig_{2n-1,2s-n}
    (\bfz_{\bfj'};-\bfh_{\bfj'}))\ne 0.
\end{align*}
Thus, there is a fixed non-zero integer $N$ with the property that
$$
    \Psi_{n-1}(\sig_{1,n}(\bfz_\bfj;\bfh_\bfj),\ldots ,\sig_{2n-1,n}(\bfz_\bfj;\bfh_\bfj))=N,
$$
and we deduce from Lemma \ref{lemma3.2} that
$$
    h_1\cdots h_n\prod_{1\le i<j\le n}(z_i-z_j)\quad \text{divides}\quad N.
$$
From here, the argument applied above in the case $n=s$ may be employed mutatis
mutandis to conclude that there are $O(X^\eps)$ possible choices for
$h_1,\ldots ,h_n$, $z_1-z_2,\ldots ,z_1-z_n$. If we put $a_i=z_i-z_1$ $(2\le i\le n)$ and
$a_1=0$, then we find just as in our earlier analysis that $z_1$ satisfies a non-trivial
polynomial equation of degree at most $k_1$, whence there are at most $k_1$ choices
for $z_1$. We therefore conclude that, given any one fixed choice of
$\bfz_{\bfj'},\bfh_{\bfj'}$, the number of choices for $\bfz_\bfj,\bfh_\bfj$ is $O(X^\eps)$.
\par

It thus remains to count the number of choices for $\bfz_{\bfj'}$ and $\bfh_{\bfj'}$. Note
in particular that, since $(\bfz,\bfh) \in \calS_{n,m}$, we have the additional information
that conditions (iii) and (iv) are satisfied. We may therefore suppose that there exists
some
$\phi\in \calT_{n,m}$, and some $m$-tuple $(\iota_1,\ldots ,\iota_m)$ with
$n+1\le \iota_1<\ldots <\iota_m\le 2s$, for which
\begin{equation}\label{6.4a}
    \phi(\bfz_\bfiota;\bfh_\bfiota)\ne 0.
\end{equation}
With a fixed choice of $\bfiota$, we may suppose further that for all $i$ satisfying
$n+1\le i\le 2s$ and
$i\not\in \{\iota_1,\ldots ,\iota_m\}$, and for all $\psi\in \calT_{n,m+1}$, one has
\begin{equation}\label{6.5}
    \psi(z_{\iota_1},\ldots ,z_{\iota_m},z_i;h_{\iota_1},\ldots ,h_{\iota_m},h_i)=0.
\end{equation}
Given any such $\bfiota$ and $\phi$, there are $O(X^{(r+1)m})$ possible choices
for $\bfz_{\bfiota},\bfh_{\bfiota}$, with $1\le \bfz_\bfiota\le X$ and
$|\bfh_\bfiota|\le X^r$, satisfying \eqref{6.4a}. We claim that for any fixed such choice,
the number of possible choices for these integers $z_i$ and $h_i$ with $n+1\le i\le 2s$
and $i\not\in \{\iota_1,\ldots ,\iota_m\}$ is $O((X^r)^{2s-n-m})$. In order to confirm
this claim, observe that there is a polynomial $\psi\in \calT_{n,m+1}$ having the property
that some coefficient of $\psi(z_1,\ldots ,z_{m+1};h_1,\ldots ,h_{m+1})$, considered as a
polynomial in $z_{m+1}$ and $h_{m+1}$, is equal to
$\phi(z_1,\ldots ,z_m;h_1,\ldots ,h_m)$. It then follows from \eqref{6.4a} that the
equation \eqref{6.5} is a non-trivial polynomial equation in $z_i$ and $h_i$. We therefore
deduce from Lemma \ref{lemma4.2} that for each fixed choice of $\bfz_\bfiota$ and
$\bfh_\bfiota$ under consideration, and for each $i$ with $n+1\le i\le 2s$ and
$i\not \in \{\iota_1,\ldots ,\iota_m\}$, there are $O(X^r)$ possible choices for $z_i$ and
$h_i$ satisfying (\ref{6.5}). Thus we infer that there are $O(X^{r(2s-n-m)})$ possible
choices for $z_i$ and $h_i$ with $n+1\le i\le 2s$ for each fixed choice of
$\bfz_{\bfiota},\bfh_{\bfiota}$. Since the number of choices for $\bfiota$ and
$\phi \in \calT_{n,m}$ is $O(1)$, the total number of choices for $\bfz_{\bfj'}$ and
$\bfh_{\bfj'}$ available to us is $O(X^{(r+1)m}\cdot X^{r(2s-n-m)})$. Furthermore, our
discussion above shows that for each fixed such choice of $\bfz_{\bfj'}$, $\bfh_{\bfj'}$,
the number of possible choices for $\bfz_{\bfj},\bfh_{\bfj}$ is $O(X^\eps)$. Thus
altogether we conclude that
\begin{equation}\label{6.5a}
    \card \calS_{n,m}\ll X^{r(2s-n)+m+\eps}.
\end{equation}

\par
By combining our estimates \eqref{6.b}, \eqref{6.3a} and \eqref{6.5a} via \eqref{5.2},
we discern that
$$\card \calS \ll X^{(2s-1)r+1}+X^{(r+1)s+\eps} + \sum_{n=1}^{s-1}
\sum_{m=0}^n X^{r(2s-n)+m+\eps} \ll X^{r(2s-1)+1+\eps},
$$
and the conclusion of Theorem \ref{theorem2.1} follows.
\end{proof}

\section{The proof of Theorems \ref{theorem1.1} and \ref{theorem1.2}}
Our preparations now complete, we establish the mean value estimates recorded in
Theorems \ref{theorem1.1} and \ref{theorem1.2}. Let $X$ be a large positive number,
and suppose that $s$ and $k$ are natural numbers with $k\ge 3$ and
$1\le s\le (k^2-1)/2$. We define the exponential sum $\grg_r(\bfalp;X)$ by putting
\begin{equation}\label{7.1}
    \grg_r(\bfalp;X)=\sum_{|h|\le sX^r}\sum_{1\le z\le X}
    e\left(\binom{r}{r}h\alp_r+\binom{r+1}{r}hz\alp_{r+1}+\ldots +\binom{k}{r}h
    z^{k-r}\alp_k\right).
\end{equation}
Also, when $1\le d\le k$, we put
$$
    \grh_d(\bfalp;X)=\sum_{1\le x\le X}e(\alp_1x+\ldots +\alp_dx^d).
$$
Then, with the standard notation associated with Vinogradov's mean value theorem in mind,
we put
\begin{equation}\label{7.2}
    J_{\sig,d}(X)=\oint |\grh_d(\bfalp;X)|^{2\sig}\d\bfalp .
\end{equation}
We note that the main conjecture in Vinogradov's mean value theorem is now known to
hold for all degrees. This is a consequence of work of the second author for degree $3$,
and for degrees exceeding $3$ it follows from the work of Bourgain, Demeter and Guth
(see \cite[Theorem 1.1]{Woo2016} and \cite[Theorem 1.1]{BDG2016}). Thus, one has
    \begin{equation}\label{7.5}
        J_{\sig,d}(X)\ll X^{\sig+\eps}\quad (1\le \sig\le d(d+1)/2).
    \end{equation}
    In addition, one finds via orthogonality that for each integer $\kap$, one has
    $$
        \oint|\grg_r(\bfalp;X)|^{2\kap}\d\bfalp \le A_{\kap,r}(sX;\bff),
    $$
where $f_j(z)=z^{k-r+1-j}$ $(1\le j\le k-r+1)$.

\begin{lemma}\label{lemma7.1}
    When $s$ is a natural number, one has
    $$
        I_{s,k,r}(X)\ll X^{-1}\oint |\grh_k(\bfalp;2X)|^{2s}\grg_r(-\bfalp;X)\d\bfalp .
    $$
\end{lemma}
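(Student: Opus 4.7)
The plan is to identify the right-hand side, via orthogonality, with a sum that overcounts $X\cdot I_{s,k,r}(X)$ by means of the natural translation strategy outlined around \eqref{1.3}--\eqref{1.4}. Expanding $|\grh_k(\bfalp;2X)|^{2s}$ as a $2s$-fold sum and $\grg_r(-\bfalp;X)$ as the double sum over $h$ and $z$, I would interchange summation and integration to obtain
\begin{equation*}
    \oint |\grh_k(\bfalp;2X)|^{2s}\grg_r(-\bfalp;X)\d\bfalp = \sum_{|h|\le sX^r}\sum_{1\le z\le X} M(h,z),
\end{equation*}
where orthogonality identifies $M(h,z)$ as the number of $\bfu,\bfv\in[1,2X]^{s}$ satisfying $\sum_{i=1}^{s}(u_i^j-v_i^j)=\ome_j h z^{j-r}$ for $1\le j\le k$, which is precisely the system \eqref{1.4}.

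The key step is then to exhibit an injection from the set of pairs $(z,(\bfx,\bfy))$ with $1\le z\le X$ and $(\bfx,\bfy)$ counted by $I_{s,k,r}(X)$ into the set of quadruples $(h,z,\bfu,\bfv)$ counted above. Given such $(\bfx,\bfy)$ and $z$, I set $h=\sum_{i=1}^{s}(x_i^r-y_i^r)$ and $u_i=x_i+z$, $v_i=y_i+z$. Then $|h|\le sX^r$ and $1\le u_i,v_i\le 2X$, and the identity \eqref{1.4} follows by expanding $(x_i+z)^j$ via the binomial theorem: the $l$-th binomial contribution is $\binom{j}{l}z^{j-l}\sum_{i}(x_i^l-y_i^l)$, which vanishes by \eqref{1.1} for every $l$ with $0\le l\le j$ except $l=r$ (that index contributing only when $j\ge r$, with weight $\binom{j}{r}=\ome_j$). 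Injectivity is immediate, since $\bfx,\bfy$ are recovered from $(z,\bfu,\bfv)$ via $x_i=u_i-z$, $y_i=v_i-z$.

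Combining these two steps yields $X\cdot I_{s,k,r}(X)\le \sum_{h,z}M(h,z)$, and division by $X$ gives the claimed bound. I do not anticipate any substantive obstacle: the argument is essentially a bookkeeping exercise implementing the strategy already sketched in the introduction. The only point requiring care is to verify that the binomial coefficients $\binom{j}{r}$ in the definition of $\grg_r$ match the factors $\ome_j$ produced on the right-hand side of \eqref{1.4} under orthogonality, and that the constraint $|h|\le sX^r$ imposed by $\grg_r$ is indeed satisfied by $h=\sum_{i}(x_i^r-y_i^r)$.
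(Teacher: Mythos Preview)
Your proposal is correct and follows essentially the same approach as the paper: both arguments identify the integral via orthogonality as the count of solutions to the shifted system \eqref{1.4} with $1\le \bfu,\bfv\le 2X$, and then observe that the translation $(\bfx,\bfy)\mapsto(\bfx+z,\bfy+z)$ together with $h=\sum_i(x_i^r-y_i^r)$ embeds each solution counted by $I_{s,k,r}(X)$ into this larger set for every choice of $z\in[1,X]$. The only cosmetic difference is that the paper phrases the overcounting as ``for each fixed $z$, $I_{s,k,r}(X)$ is bounded by the number of shifted solutions'' and then averages, whereas you package the same inequality as an explicit injection $(z,\bfx,\bfy)\hookrightarrow(h,z,\bfu,\bfv)$.
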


\begin{proof}
Define $\del_j$ to be $1$ when $j=r$, and $0$ otherwise. We start
by noting that the mean value $I_{s,k,r}(X)$ counts the number of integral solutions of the
system of equations
\begin{equation}\label{7.3}
    \sum_{i=1}^s(x_i^j-y_i^j)=\del_jh\quad (1\le j\le k),
\end{equation}
with $1\le x_i,y_i\le X$ $(1\le i\le s)$ and $|h|\le sX^r$. We remark that the constraint on
\begin{equation}\label{7.4}
    \sum_{i=1}^s(x_i^r-y_i^r)
\end{equation}
imposed by the equation of degree $r$ in (\ref{7.3}) is void, since the range for $h$
automatically accommodates all possible values of the expression (\ref{7.4}) within
(\ref{7.3}).\par

We next consider the effect of shifting every variable by an integer $z$ with $1\le z\le X$.
By the binomial theorem, for any shift $z$, one finds that $(\bfx, \bfy)$ is a solution of
(\ref{7.3}) if and only if it is also a solution of the system
$$
    \sum_{i=1}^s\left( (x_i+z)^j-(y_i+z)^j\right)=\ome_jhz^{j-r}\quad (1\le j\le k),
$$
where $\ome_j$ is $0$ for $1\le j<r$ and $\binom{j}{r}$ for $r\le j\le k$. Thus, for each
fixed integer $z$ with $1\le z\le X$, the mean value $I_{s,k,r}(X)$ is bounded above by the
number of integral solutions of the system
$$
    \sum_{i=1}^s(u_i^j-v_i^j)=\ome_jhz^{j-r}\quad (1\le j\le k),
$$
with $1\le \bfu,\bfv\le 2X$ and $|h|\le sX^r$. On applying orthogonality, we therefore
infer that
$$
    I_{s,k,r}(X)\ll X^{-1}\sum_{1\le z\le X}\oint |\grh_k(\bfalp;2X)|^{2s}
\grf(-\bfalp; z)\d\bfalp ,
$$
where
$$
    \grf(\bfalp;z)=\sum_{|h|\le sX^r}e\left(\ome_r h\alp_r+
    \ome_{r+1}hz\alp_{r+1}+\ldots +\ome_k hz^{k-r}\alp_k\right).
$$
The proof of the lemma is completed by reference to (\ref{7.1}).
\end{proof}

\begin{proof}[The proof of Theorem \ref{theorem1.2}]
    Let $s$, $k$ and $r$ be integers
    with $k>r\ge 1$. Let $\kap$ be a positive integer with $\kap\le (k-r+2)/2$, put  $$s=\left\lfloor\frac{k(k+1)}{2}-\frac{k(k+1)-r(r-1)}{4\kap}\right\rfloor,
    $$
    and then let
    $$
        v=\frac{r(r-1)}{4\kap}\quad \text{and}\quad u=s-v.
    $$
    Furthermore, set
    $$
        w=\left(1-\frac{1}{2\kap}\right)\frac{k(k+1)}{2},
    $$
    so that $s=\lfloor v+w \rfloor$. In particular, we have $w\ge u$.\par

On applying H\"older's inequality in combination with Lemma \ref{lemma7.1}, we find
    that
    \begin{equation}\label{7.7}
        I_{s,k,r}(X)\ll X^{-1}U_1^{1-1/(2\kap)}U_2^{1/(2\kap)},
    \end{equation}
    where
    \begin{equation}\label{7.8}
        U_1=\oint |\grh_k(\bfalp;2X)|^{(u/w)k(k+1)}\d\bfalp
    \end{equation}
    and
    \begin{equation}\label{7.9}
        U_2=\oint |\grh_k(\bfalp;2X)^{r(r-1)}\grg_r(\bfalp;X)^{2\kap}|\d\bfalp.
    \end{equation}

    A comparison of (\ref{7.8}) with (\ref{7.2}) leads us via (\ref{7.5}) to the estimate
    \begin{equation}\label{7.10}
        U_1\ll X^{(u/w)k(k+1)/2+\eps}.
    \end{equation}
Meanwhile, by orthogonality, we discern from (\ref{7.9}) that $U_2$ counts the number of
integral solutions of the system of equations
    \begin{align}
        \sum_{i=1}^{r(r-1)/2}(x_i^j-y_i^j)&=\binom{j}{r}\sum_{l=1}^{2\kap}h_lz_l^{j-r}&
        (r\le j\le k)\label{7.11}\\
        \sum_{i=1}^{r(r-1)/2}(x_i^j-y_i^j)&=0&(1\le j<r),\label{7.12}
    \end{align}
with $1\le \bfx,\bfy\le 2X$, $1\le \bfz\le X$ and $|\bfh|\le sX^r$. By interpreting
(\ref{7.12}) through the prism of orthogonality, it follows from (\ref{7.2}) that the number
of available choices for $\bfx$ and $\bfy$ is bounded above by $J_{r(r-1)/2,r-1}(2X)$. For
each fixed such choice of $\bfx$ and $\bfy$, it follows from (\ref{7.11}) via orthogonality
and the triangle inequality that the number of available choices for $\bfz$ and $\bfh$ is at
most $A_{\kap,r}(sX;\bff)$. Thus we deduce from (\ref{7.5}) and Theorem
\ref{theorem2.1} that
    \begin{equation}\label{7.13}
        U_2\le J_{r(r-1)/2,r-1}(2X)A_{\kap,r}(sX;\bff)\ll X^{r(r-1)/2+r(2\kap-1)+1+\eps}.
    \end{equation}
    On substituting (\ref{7.10}) and (\ref{7.13}) into (\ref{7.7}), we infer that
    $$
        I_{s,k,r}(X)\ll X^{\eps-1} (X^{(u/w)k(k+1)/2})^{1-1/(2\kap)}(X^{2r\kap+1+r(r-3)/2})^{1/(2\kap)}\ll X^{s+\Del+\eps},
    $$
    where
    $$
        \Del=(r-1)-\frac{r-1}{2\kap}.
    $$
    This completes the proof of Theorem \ref{theorem1.2}.
\end{proof}

\begin{proof}[The proof of Theorem \ref{theorem1.1}]
    The conclusion of Theorem
    \ref{theorem1.1} is an immediate consequence of Theorem \ref{theorem1.2} in the special
    case $r=1$. Making use of the notation of the statement of the latter theorem, we note
    that when $k=2l+1$ is odd, one may take $\kap=\lfloor (k+1)/2\rfloor =l+1$, and we deduce that
    $I_{s,k,1}(X)\ll X^{s+\eps}$ provided that $s$ is a natural number not exceeding
    $$
        \frac{k(k+1)}{2}-\frac{k(k+1)}{4(l+1)}=\frac{k(k+1)}{2}-\frac{k}{2}.
    $$
    Meanwhile, when $k=2l$ is even, one may instead take $\kap=l$, and the same conclusion holds
    provided that $s$ is a natural number not exceeding
    $$
        \frac{k(k+1)}{2}-\frac{k(k+1)}{4l}=\frac{k(k+1)}{2}-\frac{k+1}{2}.
    $$
    The desired conclusion therefore follows in both cases, and the proof of Theorem
    \ref{theorem1.1} is complete.
\end{proof}

\bibliographystyle{amsbracket}
\providecommand{\bysame}{\leavevmode\hbox to3em{\hrulefill}\thinspace}

\end{document}